\numberwithin{equation}{section}
\numberwithin{equation}{section}
\def\proof{\indent{\em Proof.\quad}}
\def\endproof{\hfill\hbox{$\sqcup$}\llap{\hbox{$\sqcap$}}\medskip}
\newtheorem{thm}{\indent Theorem}[section]
\newtheorem{cor}[thm]{\indent Corollary}
\newtheorem{lem}[thm]{\indent Lemma}
\newtheorem{prop}[thm]{\indent Proposition}
\newtheorem{dfn}{{\indent\bf Definition}}[section]
\newtheorem{rmk}{{\indent\bf Remark}}[section]
\newcommand{\mb}{\mbox}
\newcommand{\ima}{\sqrt{-1}}
\newcommand{\td}{\tilde}
\newcommand{\fr}{\frac}
\newcommand{\edd}{\end{document}}
\newcommand{\be}{\begin{equation}}
\newcommand{\ee}{\end{equation}}
\newcommand{\lagl}{\langle}
\newcommand{\ragl}{\rangle}
\newcommand{\lmx}{\left(\begin{matrix}}
\newcommand{\rmx}{\end{matrix}\right)}
\newcommand{\ldt}{\left|\begin{matrix}}
\newcommand{\rdt}{\end{matrix}\right|}
\newcommand{\dv}{{\rm div\,}}
\newcommand{\vfi}{\varphi}
\newcommand{\bbr}{{\mathbb R}}
\newcommand{\kr}{K\"ahler }
\newcommand{\bbc}{{\mathbb C}}
\newcommand{\ba}{\begin{array}}
\newcommand{\ea}{\end{array}}
\newcommand{\nnm}{\nonumber}
\newcommand{\beal}{\begin{align}}
\newcommand{\eal}{\end{align}}
\newcommand{\bea}{\begin{eqnarray}}
\newcommand{\eea}{\end{eqnarray}}
\newcommand{\pp}[2]{\fr{\partial #1}{\partial #2}}
\begin{document}

\title[On the Lagrangian angle and K\"ahler angle of surfaces in $\bbc^2$]{On the Lagrangian angle and the K\"ahler angle \\ of immersed surfaces in the complex plane $\bbc^2$} 

\author[X. X. Li]{Xingxiao Li$^*$} 

\author[X. Li]{Xiao Li} 

\dedicatory{}

\subjclass[2000]{ 
Primary 53A30; Secondary 53B25. }
%
\keywords{ 
\kr angle, Lagrangian angle, self-shrinker.}
\thanks{Research supported by
National Natural Science Foundation of China (No. 11171091, 11371018).}
\address{
School of Mathematics and Information Sciences
\endgraf Henan Normal University \endgraf Xinxiang 453007, Henan
\endgraf P.R. China}
\email{xxl$@$henannu.edu.cn}

\address{
School of Mathematics and Information Sciences
\endgraf Henan Normal University \endgraf Xinxiang 453007, Henan
\endgraf P.R. China} %
\email{lxlixiaolx$@$163.com}



\begin{abstract}
In this paper, we discuss the Lagrangian angle and the \kr angle of immersed surfaces in $\mathbb C^2$. Firstly, we provide an extension of Lagrangian angle, Maslov form and Maslov class to more general surfaces in $\mathbb C^2$ than Lagrangian surfaces, and then naturally extend a theorem by J.-M. Morvan to surfaces of constant \kr angle, together with an application showing that the Maslov class of a compact self-shrinker surface with constant \kr angle is generally non-vanishing. Secondly, we obtain two pinching results for the \kr angle which imply rigidity theorems of self-shrinkers with \kr angle under the condition that $\int_M |h|^2e^{-\frac{|x|^2}{2}}dV_M<\infty$, where $h$ and $x$ denote, respectively, the second fundamental form and the position vector of the surface.
\end{abstract}

\maketitle

\section{Introduction} 

Suppose that $(N,\mathbf{J})$ is an (almost) Hermitian manifold with $\omega$ its \kr form. Then the Riemannian metric $\langle\cdot,\cdot\rangle$ on $N$ is related with $\omega$ by
\be\langle U,V\rangle=\omega(U,\mathbf JV),\quad U, V\in TN.\label{1.1}\ee
Let $x:M^m \to  N$ be an immersed submanifold with tangent space $TM^m$, normal space $T^\bot M^m$ and volume form $dV_M$ of the induced metric. Then we call $x$ to be totally real if $J(x_*TM^m)\subset T^\bot M^m$; if, furthermore, $\dim_{\bbc}N=m$, then $x$ is called Lagrangian. In the special case of $m=2$, we can introduce the concept of \kr angle to define alternatively the totally real surfaces and Lagrangian surfaces. According to \cite{cw}, the \kr angle $\theta$ of an immersed surface $x:M^2\to N$ is defined by
\be x^*\omega=\cos\theta dV_M,\quad \theta\in [0,\pi].\label{1.2}\ee
Then $x$ is totally real if and only if $\cos\theta\equiv 0$, or equivalently, the \kr angle $\theta$ of $x$ is identical to $\fr\pi2$; $x$ is Lagrangian in $N$ if and only if $\theta\equiv\fr\pi2$ and $\dim_{\bbc}N=2$. By the way, if $\cos\theta>0$ everywhere, then $x$ is called symplectic.

The \kr angle of conformal minimal surfaces in complex projective space $\bbc P^n$ has been extensively studied. For example, each of the maps in the Veronese sequence gives a minimal immersion of $2$-sphere into $\bbc P^n$ that has a constant \kr angle (see \cite{bolton etc}, \cite{zhli}, \cite{lmwood}). The \kr angle of surfaces immersed in the nearly \kr manifold ${\mathbb S}^6$ is studied, for example, in \cite{ll} and \cite{lxx}.

Now put $N=\bbc^m$, the complex Euclidean space with complex coordinates $(z^1,\cdots,z^m)$. Let $\Omega=dz^1\wedge\cdots\wedge dz^m$ be the globally defined {\em holomorphic volume form}. For a Lagrangian submanifold $x:M^m\to \bbc^m$, the Lagrangian angle of $x$ is by definition a multi-valued function $\beta_x:M^m\to  \mathbb{R}/2\pi\mathbb{Z}$ given by
$$\Omega_M:=x^*\Omega=e^{\ima\beta_x}dV_M.$$
Note that, although the Lagrangian angle $\beta_x$ can not be determined globally in general, its gradient $\nabla \beta_x$ is clearly a well-defined vector field on $M^m$.

In \cite{mo}, J.-M. Morvan proved an important formula by which the mean curvature and the Lagrangian angle of a Lagrangian submanifold are linked to each other. Precisely we have

\begin{thm}[\cite{mo}]\label{mo} Let $x:M^m\to \bbc^m$ be a Lagrangian submanifold and $\mathbf{J}$ be the canonical complex structure of $\bbc^m$. Then the mean curvature vector $H$ and the Lagrangian angle $\beta_x$ meet the following formula:
\be\label{1.3} H=\mathbf{J}x_*(\nabla\beta_x),\ee
where $\nabla$ denotes the gradient operator on $M^m$.\end{thm}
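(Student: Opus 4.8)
The plan is to work with a local adapted orthonormal frame and to differentiate the defining relation for $\beta_x$ directly. Since $x$ is Lagrangian, at each point of $M^m$ I choose a positively oriented orthonormal tangent frame $e_1,\dots,e_m$; then $\mathbf{J}e_1,\dots,\mathbf{J}e_m$ is an orthonormal frame of the normal bundle, and together they form an orthonormal frame of $\bbc^m$ along $x$. With this frame the defining equation reduces to the scalar identity $e^{\ima\beta_x}=\Omega(x_*e_1,\dots,x_*e_m)$, because $dV_M(e_1,\dots,e_m)=1$. Fixing a point $p$, I further normalize the frame so that $\nabla_Xe_j=0$ at $p$ for the induced connection $\nabla$.

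First I differentiate this scalar identity along a tangent direction $X$. Because $\Omega=dz^1\wedge\cdots\wedge dz^m$ has constant coefficients, it is parallel for the flat ambient connection $\bar\nabla$ of $\bbc^m$, so
$$\ima(X\beta_x)e^{\ima\beta_x}=\sum_j\Omega\big(x_*e_1,\dots,\bar\nabla_X(x_*e_j),\dots,x_*e_m\big).$$
The Gauss formula gives $\bar\nabla_X(x_*e_j)=x_*(\nabla_Xe_j)+h(X,e_j)$, which at $p$ reduces to the normal vector $h(X,e_j)$. Writing $h(X,e_j)=\sum_l\langle h(X,e_j),\mathbf{J}e_l\rangle\,\mathbf{J}e_l$ and inserting it into the $j$-th slot, I am led to evaluate $\Omega(e_1,\dots,\mathbf{J}e_l,\dots,e_m)$ with $\mathbf{J}e_l$ in position $j$.

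The key algebraic step is this determinant evaluation. Since each $dz^k$ is complex linear, $dz^k(\mathbf{J}V)=\ima\,dz^k(V)$, so replacing the $j$-th entry by $\mathbf{J}e_l$ multiplies the $j$-th column of the matrix $(dz^k(e_p))$ by $\ima$ and turns it into $\ima$ times the $l$-th column. Hence for $l\neq j$ two columns become proportional and the determinant vanishes, while for $l=j$ it equals $\ima\,e^{\ima\beta_x}$. Only the diagonal terms survive, and after cancelling $\ima\,e^{\ima\beta_x}$ I obtain
$$X\beta_x=\sum_j\langle h(X,e_j),\mathbf{J}e_j\rangle.$$

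Finally I convert the right-hand side into $\langle H,\mathbf{J}X\rangle$ using the full symmetry of the cubic form $C(X,Y,Z):=\langle h(X,Y),\mathbf{J}Z\rangle$. Symmetry in the first two arguments is that of $h$; symmetry in the last two follows from the Lagrangian condition together with $\bar\nabla\mathbf{J}=0$, by differentiating $\langle x_*Y,\mathbf{J}x_*Z\rangle\equiv0$ and using that $\mathbf{J}$ is skew-adjoint. Taking $X=e_k$ then gives $e_k\beta_x=\sum_j\langle h(e_j,e_j),\mathbf{J}e_k\rangle=\langle H,\mathbf{J}e_k\rangle$, so $\nabla\beta_x=\sum_k\langle H,\mathbf{J}e_k\rangle e_k$ and therefore $\mathbf{J}x_*(\nabla\beta_x)=\sum_k\langle H,\mathbf{J}e_k\rangle\mathbf{J}e_k=H$, since $\{\mathbf{J}e_k\}$ is an orthonormal basis of the normal space and $H$ is normal. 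I expect the determinant computation and the verification of the symmetry of $C$ to be the only nonroutine points; the remainder is bookkeeping with the adapted frame.
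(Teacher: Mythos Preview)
Your proof is correct. Note, however, that the paper does not itself prove this theorem: it is quoted from Morvan~\cite{mo} as a known result, and the paper's own contribution is the extension Theorem~\ref{thm1.2} for surfaces in $\bbc^2$ with constant \kr angle. So the only ``paper proof'' to compare against is the proof of Theorem~\ref{thm1.2} in Section~3, specialized to $\theta=\pi/2$.

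Both arguments rest on the same core idea: differentiate the defining relation $x^*\Omega=e^{\ima\beta_x}dV_M$ using that $\Omega$ is parallel, and then read off the coefficient of $\Omega$. The executions differ. The paper expresses $\Omega$ along $x$ in terms of the $(1,0)$-forms $\omega^i-\ima\mathbf J\omega^i$ (equation~\eqref{OME}), computes $\bar D_X(\omega^i-\ima\mathbf J\omega^i)$ via the special frame \eqref{2.2}, and extracts equation~\eqref{2.11}; this machinery is needed because the paper must handle the non-Lagrangian factor $\sin\theta$. You instead evaluate $\Omega$ on the vector frame $(x_*e_1,\dots,x_*e_m)$ and use the determinant structure of $\Omega=dz^1\wedge\cdots\wedge dz^m$ together with $dz^k(\mathbf JV)=\ima\,dz^k(V)$ to kill the off-diagonal contributions---this is cleaner, works in arbitrary dimension $m$, and bypasses the frame \eqref{2.2} entirely. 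Your use of the full symmetry of the Lagrangian cubic form $C(X,Y,Z)=\langle h(X,Y),\mathbf JZ\rangle$ to pass from $\sum_j\langle h(X,e_j),\mathbf Je_j\rangle$ to $\langle H,\mathbf JX\rangle$ is the standard device and is correctly justified. In short: same underlying idea, but your route is the natural one for the pure Lagrangian statement, while the paper's route is tailored to its more general setting and only covers $m=2$.
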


Consequently, $x$ is minimal if and only if its {\em Maslov form} $\alpha:=-d\beta_x$ vanishes identically, implying that $x$ has a trivial {\em Maslov class} $[\alpha]\in H^1(M)$, where $H^1(M)$ is the first homology group of $M^m$.

\begin{rmk}\rm Formulas similar or related to \eqref{1.3} have also been obtained by some other authors, say, in \cite{c-g}, \cite{s-w} and etc. In particular, A. Arsie has extended Theorem \ref{mo} in \cite{ar} to Lagrangian submanifolds in general Calabi-Yau manifolds.\end{rmk}

Theorem \ref{mo} and its various extensions have many applications and citations in a lot of literatures, including papers on the Lagrangian mean curvature flow. As mentioned in \cite{cl2}, there is a direct interesting application of \eqref{1.3} first given by Smoczyk, and then by Castro and Lermain (\cite{cl2}) using a different method: {\em No compact and orientable Lagrangian self-shrinkers to the Lagrangian mean curvature flow in $\bbc^m$ with trivial Maslov class}.

In this paper, we consider the Lagrangian angle and \kr angle of immersed surfaces in $\bbc^2$. As the first part, we provide an extension of Lagrangian angle, Maslov form and Maslov class to more general surfaces in $\mathbb C^2$ than Lagrangian surfaces, and then naturally extend Theorem \ref{mo} of J.-M. Morvan to surfaces of constant \kr angle, together with an application showing that the Maslov class of a compact self-shrinker surface with constant \kr angle is generally non-vanishing. Precisely, we shall prove

\begin{thm}\label{thm1.2}
Let $x:M^2\to \bbc^2$ be an immersed surface with constant \kr angle $\theta$. Then the mean curvature vector $H$, the canonical complex structure $\mathbf{J}$ of $\bbc^2$ and the Lagrangian angle $\beta_x$ meet the following formula:
\be\label{1.4}\sin^2\theta x_*(\nabla\beta_x)=-(\mathbf{J}H)^\top,\ee
where $^\top$ denotes the projection onto the tangent space $x_*(TM^2)$.\end{thm}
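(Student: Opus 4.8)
The plan is to work in the Chern--Wolfson normal frame for the \kr angle and to differentiate the complex‑valued function $f:=x^*\Omega(e_1,e_2)=\Omega(x_*e_1,x_*e_2)$, whose modulus is $\sin\theta$ and whose phase is precisely $\beta_x$. First I would fix, near a point where $\theta\in(0,\pi)$, an oriented orthonormal tangent frame $\{e_1,e_2\}$ of $M^2$ together with an orthonormal normal frame $\{e_3,e_4\}$ adapted to $\theta$ (\cite{cw}), so that
\be \mathbf Je_1=\cos\theta\,e_2+\sin\theta\,e_3,\quad \mathbf Je_2=-\cos\theta\,e_1+\sin\theta\,e_4,\ee
with the forced companions $\mathbf Je_3=-\sin\theta\,e_1-\cos\theta\,e_4$ and $\mathbf Je_4=-\sin\theta\,e_2+\cos\theta\,e_3$. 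Since $x^*\Omega$ and $dV_M$ are globally defined, $f$ is independent of the oriented frame, and by the extension of the Lagrangian angle one has $f=\sin\theta\,e^{\ima\beta_x}$; as $\theta$ is constant this gives $d\beta_x=\im(df/f)$, the extreme cases $\theta\equiv0,\pi$ being trivial because then both sides of \eqref{1.4} vanish.

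The computational engine is that $\Omega$ is a parallel $(2,0)$‑form on $\bbc^2$, so that $\Omega(\mathbf JU,V)=\ima\,\Omega(U,V)=\Omega(U,\mathbf JV)$ and $\bar\nabla\Omega=0$ for the ambient flat connection $\bar\nabla$. Using the type relation together with the frame identities, I would express every mixed pairing in terms of $f$, namely $\Omega(e_1,e_3)=\Omega(e_2,e_4)=-\fr{\cos\theta}{\sin\theta}f$ and $\Omega(e_1,e_4)=-\Omega(e_2,e_3)=\fr{\ima}{\sin\theta}f$. Then, differentiating $f$ along a tangent vector $X$ with $\bar\nabla\Omega=0$ and the Gauss formula $\bar\nabla_Xe_i=\nabla_Xe_i+h(X,e_i)$, the tangential connection terms drop out by antisymmetry of $\Omega$, leaving $df(X)=\Omega(h(X,e_1),e_2)+\Omega(e_1,h(X,e_2))$. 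Substituting the pairings above writes $df(X)/f$ as a single combination of the normal components $h^\alpha_{ij}=\langle h(e_i,e_j),e_\alpha\rangle$, with its real part proportional to $h^4(X,e_1)-h^3(X,e_2)$ and its imaginary part equal to $\fr1{\sin\theta}\big(h^3(X,e_1)+h^4(X,e_2)\big)$.

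The decisive structural input is constancy of $\theta$. Differentiating $\omega(e_1,e_2)=\cos\theta\equiv\const$ and using $\bar\nabla\omega=0$ gives $0=\omega(h(X,e_1),e_2)+\omega(e_1,h(X,e_2))$, which in the adapted frame reduces to $h^4_{k1}=h^3_{k2}$ for $k=1,2$. This is exactly the vanishing of $\re(df/f)=d(\log\sin\theta)$, confirming that $d\beta_x$ is real, and it yields
\be d\beta_x(X)=\fr{1}{\sin\theta}\big(h^3(X,e_1)+h^4(X,e_2)\big).\ee
Evaluating at $X=e_1,e_2$ and inserting the two identities $h^4_{12}=h^3_{22}$ and $h^3_{12}=h^4_{11}$ converts these mixed terms into genuine trace components of $H=\sum_ih(e_i,e_i)=H^3e_3+H^4e_4$, giving $x_*(\nabla\beta_x)=\fr{1}{\sin\theta}\big(H^3e_1+H^4e_2\big)$.

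Finally I would compute $\mathbf JH$ from the frame relations and extract its tangential part, $(\mathbf JH)^\top=-\sin\theta\,(H^3e_1+H^4e_2)$. Comparing with the previous display gives $\sin^2\theta\,x_*(\nabla\beta_x)=-(\mathbf JH)^\top$, which is \eqref{1.4}; specializing to $\theta=\fr\pi2$ recovers Theorem \ref{mo}. I expect the main obstacle to be bookkeeping rather than conceptual: keeping the interplay between the $(2,0)$‑type relations and the adapted frame consistent so that the modulus $\sin\theta$ and the tangential projection emerge with correct signs, and pinpointing constancy of $\theta$ as the single hypothesis that simultaneously keeps $\beta_x$ real and collapses the off‑diagonal second fundamental form terms into $H$.
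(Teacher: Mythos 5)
Your proposal is correct, and it reaches \eqref{1.4} by a route that is organized genuinely differently from the paper's, although both arguments rest on the same two pillars: the adapted frame \eqref{2.2} and the parallelism of $\Omega$. The paper writes $\Omega$ along $x$ as $\fr{1}{\sin\theta}e^{\ima\beta_x}(\omega^1-\ima\mathbf J\omega^1)\wedge(\omega^2-\ima\mathbf J\omega^2)$ (formula \eqref{OME}), differentiates this $2$-form identity, and must solve the small linear system \eqref{2.14}--\eqref{2.14a} for the coefficients expressing $\bar D_X\omega^i-\ima\bar D_X\mathbf J\omega^i$ in the $(1,0)$-coframe before $d\beta_x$ can be extracted. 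You instead differentiate the scalar $\eta_x=\Omega(x_*e_1,x_*e_2)$ directly: antisymmetry of $\Omega$ kills the tangential connection terms, and the $(2,0)$-type relations reduce everything to the four mixed pairings $\Omega(e_i,e_a)$, each a multiple of $\eta_x$ (your values check out). This buys two things: the computation is shorter (no $(1,0)$-form decomposition, no linear system), and the splitting $df/f=d\log\sin\theta+\ima\,d\beta_x$ makes transparent that constancy of $\theta$ enters exactly once, to annihilate the real part $\fr{\cos\theta}{\sin\theta}\bigl(h^4(X,e_1)-h^3(X,e_2)\bigr)$ --- which is precisely the condition \eqref{rch} that then turns the mixed components in the imaginary part into the traces $H^3,H^4$; your endgame $(\mathbf JH)^\top=-\sin\theta(H^3e_1+H^4e_2)$ agrees with the paper's \eqref{5.3}. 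The only ingredient you take on faith is $|\eta_x|=\sin\theta$, which the paper establishes by an $SU(2)$ normalization (and which also follows from your own pairing identities together with $\Omega(e_3,e_4)=-\eta_x$ and the constancy of $\sum_{A<B}|\Omega(e_A,e_B)|^2=|\Omega|^2$); and, as in the paper, the degenerate case $\sin\theta=0$ should be dispatched by noting that holomorphic curves in $\bbc^2$ are minimal, so both sides of \eqref{1.4} vanish.
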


Clearly, if $x$ is a Lagrangian surface then \eqref{1.4} agrees with \eqref{1.3}.

Note that an immersion $x:M^m\to\bbr^{m+p}$ is called a {\em self-shrinker} (of the mean curvature flow) if $H=-x^\bot $, where $^\bot$ denotes the orthogonal projection to the normal bundle of $x$. Thus an application of Theorem \ref{thm1.2} proves the following

\begin{thm}\label{thm1.3}
Let $x:M^2\rightarrow \bbc^2$ be a self-shrinker with constant \kr angle $\theta\in(0,\pi)$. If M is compact and orientable, then the Maslov class $[\alpha]\neq 0$.\end{thm}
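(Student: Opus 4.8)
The plan is to argue by contradiction. Suppose that the Maslov class vanishes, $[\alpha]=0$. Since $\alpha=-d\beta_x$ is automatically closed, its triviality in $H^1(M)$ means exactly that $\alpha$ is exact, i.e. that the a priori multi-valued Lagrangian angle $\beta_x$ descends to a single-valued smooth function $\beta_x:M^2\to\bbr$. My goal is then to show that this forces $\beta_x$ to be constant, and finally that a constant $\beta_x$ is incompatible with the existence of a compact self-shrinker.

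The heart of the argument is a weighted integral identity. Write $\phi=\frac12|x|^2$, so that $x_*(\nabla\phi)=x^\top$, and let $\mathcal L=\Delta-\langle\nabla\phi,\nabla\cdot\rangle$ be the drift Laplacian associated to the Gaussian weight $e^{-|x|^2/2}$; recall that on a closed $M$ one has $\int_M u\,\mathcal Lv\,e^{-|x|^2/2}dV_M=-\int_M\langle\nabla u,\nabla v\rangle e^{-|x|^2/2}dV_M$. I would first use Theorem \ref{thm1.2} together with the self-shrinker equation $H=-x^\bot$ to rewrite \eqref{1.4} as $\sin^2\theta\,x_*(\nabla\beta_x)=(\mathbf Jx^\bot)^\top$, and then compute $\Delta\beta_x=\mathrm{div}_M(\nabla\beta_x)$ by differentiating this relation in an adapted orthonormal frame $\{e_1,e_2;e_3,e_4\}$ in which $\mathbf J$ is normalized by the \kr angle $\theta$. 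The outcome I expect is the pointwise identity $\mathcal L\beta_x=0$, that is $\Delta\beta_x=\langle x^\top,\nabla\beta_x\rangle$; the main obstacle is precisely this computation, since it requires carrying the symmetry of the second fundamental form (the analogue of the fully symmetric cubic form of Lagrangian geometry) through the constant-\kr-angle normalization of $\mathbf J$ and using $H=-x^\bot$ to cancel the remaining terms. Granting $\mathcal L\beta_x=0$, and since $\beta_x$ is now single-valued, I may take $u=v=\beta_x$ above to get $\int_M|\nabla\beta_x|^2e^{-|x|^2/2}dV_M=0$, whence $\nabla\beta_x\equiv0$ and $\beta_x$ is constant. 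Equivalently, $\beta_x$ is $\mathcal L$-harmonic on a closed manifold and the strong maximum principle forces it to be constant.

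It remains to derive a contradiction from $\nabla\beta_x\equiv0$. Feeding this back into \eqref{1.4} gives $(\mathbf JH)^\top=0$. In the adapted frame $\mathbf J$ carries the normal space isomorphically onto a tangential direction up to the factor $\sin\theta$; concretely, if $H=c\,e_3+d\,e_4$ then $(\mathbf JH)^\top=-\sin\theta\,(c\,e_1+d\,e_2)$, so $(\mathbf JH)^\top=0$ together with $\sin\theta\neq0$ (which holds because $\theta\in(0,\pi)$) yields $H\equiv0$. Thus $M^2$ would be a closed minimal surface in $\bbc^2=\bbr^4$, which is impossible: for a minimal immersion the position map satisfies $\Delta x=H=0$, so each Euclidean coordinate of $x$ is harmonic on the closed manifold $M^2$ and hence constant, forcing $x$ to be constant and contradicting that it is an immersion of a positive-dimensional manifold. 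Alternatively, $H\equiv0$ and $H=-x^\bot$ give $x^\bot\equiv0$, and evaluating at a maximum point of $\phi=\frac12|x|^2$, where also $\nabla\phi=0$ and hence $x^\top=0$, forces $x=0$ there and thus $\phi\equiv0$, the same contradiction. This completes the proof that $[\alpha]\neq0$.
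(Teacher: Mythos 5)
Your proposal is correct and follows essentially the same route as the paper: the identity $\mathcal{L}\beta_x=0$ that you anticipate (but leave as the ``main obstacle'') is exactly what the paper establishes by computing $\dv(\mathbf JH)^\top=\lagl(\mathbf JH)^\top,x^\top\ragl$ in the adapted frame and combining it with Theorem \ref{thm1.2}, after which the paper likewise applies the maximum principle to the single-valued primitive of $\alpha$, concludes $\nabla\beta_x\equiv0$, hence $H\equiv0$ via $(\mathbf JH)^\top=-\sin\theta(H^3x_*e_1+H^4x_*e_2)$ and $\sin\theta\neq0$, and reaches the same contradiction with the nonexistence of compact minimal submanifolds of Euclidean space. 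The only differences are cosmetic: you offer the weighted integration by parts $\int_M|\nabla\beta_x|^2e^{-|x|^2/2}dV_M=0$ as an alternative to the maximum principle, and you spell out why closed minimal surfaces in $\bbr^4$ cannot exist.
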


\begin{cor}\label{cor1.2} There are no self-shrinkers with constant \kr angle $\theta\in(0,\pi)$ in $\bbc^2$ with the topology of a sphere.\end{cor}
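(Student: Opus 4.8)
The plan is to obtain Corollary \ref{cor1.2} as an immediate consequence of Theorem \ref{thm1.3} together with a standard topological fact, so I would argue by contradiction. Suppose, to the contrary, that there exists a self-shrinker $x:M^2\to\bbc^2$ with constant \kr angle $\theta\in(0,\pi)$ whose underlying surface $M^2$ has the topology of a sphere. Since the $2$-sphere is both compact and orientable, $M^2$ satisfies all the hypotheses of Theorem \ref{thm1.3}, and we may therefore conclude that the Maslov class $[\alpha]\neq 0$.

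On the other hand, I would recall that the Maslov form $\alpha=-d\beta_x$ is a closed $1$-form on $M^2$, so that its class $[\alpha]$ lives in $H^1(M^2)$. For a surface carrying the topology of a sphere one has $H^1(S^2)=0$, and hence the class of every closed $1$-form necessarily vanishes; in particular $[\alpha]=0$. This directly contradicts $[\alpha]\neq 0$, and the contradiction shows that no self-shrinker of the asserted type can exist.

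The deduction is essentially formal once Theorem \ref{thm1.3} is in hand, so I do not anticipate any genuine analytic difficulty; the content of the corollary lies entirely in Theorem \ref{thm1.3}. The only points deserving attention are bookkeeping ones: first, that the constant \kr angle lying in the open interval $(0,\pi)$ (so that $\sin^2\theta\neq 0$) is exactly the condition inherited from Theorem \ref{thm1.3} and is what makes $\nabla\beta_x$ well defined through \eqref{1.4}; and second, that the step forcing $[\alpha]=0$ rests on the vanishing of the first (co)homology of the sphere, $H^1(S^2)=0$. Both facts are standard, and the corollary follows at once.
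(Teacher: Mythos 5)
Your argument is correct and is exactly the (implicit) reasoning the paper intends: the corollary is an immediate consequence of Theorem \ref{thm1.3}, since a surface with the topology of a sphere is compact and orientable, so the theorem would force $[\alpha]\neq 0$ in $H^1(M^2)$, while $H^1(S^2)=0$ forces $[\alpha]=0$. Nothing is missing.
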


As for symplectic surfaces there has been a fairly large amount of work. In particular, if the initial surface of a mean curvature flow in a \kr\!-Einstein surface is symplectic, then the surface $M_t$ at every time $t$ is also symplectic (\cite{cl1}, \cite{ct1}). So this kind of flow is referred to as a {\em symplectic mean curvature flow}. In \cite{as}, by using the self-adjoint property of a stability operator, Arezzo-Sun prove the following gap theorem (restated for the different definition of self-shrinkers here):

\begin{thm}[\cite{as}] Suppose that $x:M^2\to \bbc^2$ is a complete symplectic self-shrinker with polynomial volume growth. If $|h|^2\leq 2$, then $|h|^2\equiv 0$ and $x(M^2)$ must be a plane.\end{thm}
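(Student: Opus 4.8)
The plan is to treat $\cos\theta$ as the fundamental unknown and to exploit the self-shrinker equation $H=-x^{\bot}$ through the weighted (drift) Laplacian. Set $f=\tfrac12|x|^{2}$ and
\[
\mathcal L:=\Delta-\langle x,\nabla\,\cdot\,\rangle,
\]
the operator attached to the Gaussian weight $e^{-|x|^{2}/2}$; it is formally self-adjoint on $L^{2}(e^{-f}dV_{M})$, and this self-adjointness is the ``stability operator'' property invoked in the statement. Two elementary facts drive everything: the coordinate functions satisfy $\mathcal L x_{i}=-x_{i}$ (since $\Delta x_{i}=H^{i}=-(x^{\bot})^{i}$ while $\langle x,\nabla x_{i}\rangle=(x^{\top})^{i}$), and the symplectic hypothesis guarantees $\cos\theta>0$, so that $\cos\theta$, $\ln\cos\theta$ and $1/\cos\theta$ are all legitimate, bounded-below test functions. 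The target is to force $|h|\equiv0$; once that is known the surface is totally geodesic, hence an affine plane, and $H=0$ together with $H=-x^{\bot}$ pins it through the origin.

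First I would derive the elliptic equation obeyed by $\cos\theta$ on the shrinker. Choosing at a point an adapted orthonormal frame $e_{1},e_{2}$ of $TM$ and $e_{3},e_{4}$ of $T^{\bot}M$ with
\[
\mathbf Je_{1}=\cos\theta\,e_{2}+\sin\theta\,e_{3},\qquad \mathbf Je_{2}=-\cos\theta\,e_{1}+\sin\theta\,e_{4},
\]
and using that $\mathbf J$ is parallel in $\bbc^{2}$, one differentiates $\cos\theta=\langle \mathbf Je_{1},e_{2}\rangle$ twice. Each derivative brings down the second fundamental form through $\bar\nabla_{e_{k}}e_{i}=\nabla_{e_{k}}e_{i}+h(e_{k},e_{i})$, so that $\nabla\cos\theta$ is linear in $h$ and $\Delta\cos\theta$ is expressed through $|h|^{2}$, the Codazzi tensor and $\theta$. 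The shrinker relation $H=-x^{\bot}$ is then used precisely to convert the ambient terms $\langle \mathbf Jx^{\bot},\cdot\,\rangle$ into the drift contribution $\langle x,\nabla\cos\theta\rangle$, assembling the left-hand side into $\mathcal L\cos\theta$. I expect an identity of the shape
\[
\mathcal L\cos\theta=(1-|h|^{2})\cos\theta+\mathcal G(\theta,\nabla\theta,h),
\]
where $\mathcal G$ is a controlled first-order/curvature correction; this is the self-shrinker counterpart of the known evolution equation for the Kähler angle under symplectic mean curvature flow, specialised to the flat, Ricci-flat target $\bbc^{2}$.

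Next I would integrate this identity against a suitable weighted test function ($\cos\theta$, or $1$, or $1/\cos\theta$, whichever diagonalises $\mathcal G$) over $M$. Here polynomial volume growth is essential: together with $0<\cos\theta\le1$ it legitimises the Colding--Minicozzi weighted integration-by-parts (via cutoffs $\phi_{R}$ supported in $B_{R}$), makes all the relevant Gaussian integrals converge, kills the boundary terms, and turns $\int_{M}\cos\theta\,\mathcal L\cos\theta\,e^{-f}\,dV_{M}$ into $-\int_{M}|\nabla\cos\theta|^{2}e^{-f}\,dV_{M}$ by self-adjointness. The hypothesis $|h|^{2}\le2$ is then fed in to force every term in the resulting identity to carry the same sign, so that simultaneously $\nabla\cos\theta\equiv0$ and the $|h|^{2}$-term vanishes; tracing back through the equation yields $|h|\equiv0$, whence $x(M^{2})$ is a plane.

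The main obstacle is the second step: pinning down the exact coefficients of the $\cos\theta$-equation, and in particular the sign and size of the correction $\mathcal G$, so that the cancellation closes precisely at the threshold $|h|^{2}=2$ rather than at some non-sharp constant. That $2$ is the right number is signalled by the Lagrangian Clifford torus $S^{1}(1)\times S^{1}(1)\subset\bbc^{2}$, a self-shrinker with $|h|^{2}\equiv2$ but $\cos\theta\equiv0$: it sits exactly on the boundary of the symplectic regime, so any correct computation must degenerate there and admit no symplectic competitor strictly below $|h|^{2}=2$. Getting the bookkeeping of $\mathbf J$, $h$ and the Codazzi terms to reproduce this sharp constant, while keeping $\mathcal G$ of a definite sign after integration, is where the real work lies; completeness and polynomial volume growth serve only to render the otherwise formal integrations rigorous.
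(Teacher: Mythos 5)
First, a caveat on the comparison: the paper does not prove this statement itself --- it is quoted verbatim from Arezzo--Sun \cite{as} --- but the machinery needed to prove it is exactly what the paper develops in Section 5, namely the identity \eqref{5.7}, $\mathcal L\cos\theta=-\tfrac14\cos\theta\,|\bar D\mathbf J_M|^2$, combined with the weighted integration by parts of Lemma \ref{s2}; the paper's own Theorem \ref{thm1.8} is a generalization of the quoted result proved along precisely these lines. Your high-level strategy (derive a drift-Laplacian equation for $\cos\theta$ from $H=-x^\bot$, then integrate against $e^{-|x|^2/2}$ using self-adjointness of $\mathcal L$) is therefore the right one.

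The genuine gap is in the central identity and the sign analysis you build on it. The correct equation has right-hand side $-\tfrac14\cos\theta\,|\bar D\mathbf J_M|^2$ with $\tfrac14|\bar D\mathbf J_M|^2=\sum_i\bigl((h^4_{2i}+h^3_{1i})^2+(h^4_{1i}-h^3_{2i})^2\bigr)$: a \emph{specific non-negative} quadratic in $h$, not $|h|^2$, and there is no $(1-|h|^2)\cos\theta$ term and no residual $\mathcal G$. Your conjectured form $\mathcal L\cos\theta=(1-|h|^2)\cos\theta+\mathcal G$ cannot support the argument you sketch: under $|h|^2\le 2$ the factor $1-|h|^2$ changes sign at $|h|^2=1$, so ``feeding in $|h|^2\le2$ to force every term to carry the same sign'' does not close, and even formally it would point toward $|h|^2\equiv1$ rather than $0$. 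With the correct identity, $\int\mathcal L\cos\theta\,e^{-|x|^2/2}dV_M=0$ forces $\cos\theta|\bar D\mathbf J_M|^2\equiv0$ with \emph{no} pinching; symplecticity gives $|\bar D\mathbf J_M|^2\equiv0$, hence $H\equiv0$, and then the self-shrinker equation yields $\langle x,e_a\rangle=0$ and, via the determinant argument at the end of the proof of Theorem \ref{thm1.8}, $h\equiv0$ --- a step your outline skips entirely. Consequently your reading of $2$ as a sharp threshold detected by the Clifford torus is a misattribution: that torus is Lagrangian, not symplectic, and is extremal for Theorem \ref{thm1.7}; in the symplectic gap theorem the hypothesis $|h|^2\le2$ (with polynomial volume growth) serves only to guarantee the integrability hypotheses of Lemma \ref{s2}. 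Finally, note that symplectic gives only $\cos\theta>0$, not a positive lower bound, so $\ln\cos\theta$ and $1/\cos\theta$ are not bounded test functions as claimed.
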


Moreover, Han-Sun prove a theorem in \cite{hs} that a translating soliton to the symplectic mean curvature flow in $\bbc^2$ with polynomial volume growth, non-positive normal curvature and bounded second fundamental form must be minimal in case that $\cos\theta\geq \delta>0$. The following is a similar result for symplectic self-shrinkers:

\begin{thm}[\cite{as}] Let $x:M^2\to \bbc^2$ be a complete symplectic self-shrinker with \kr angle $\theta$ and polynomial volume growth. If $|h|^2$ is bounded and $\cos\theta\geq\delta>0$, then $x(M^2)$ must be a plane.\end{thm}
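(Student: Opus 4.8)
The plan is to run a weighted maximum-principle argument for the \kr angle, using the drift Laplacian $\cll=\Delta-\langle x,\nabla\,\cdot\,\rangle$ adapted to self-shrinkers. Since $H=-x^\bot$, this operator is self-adjoint with respect to the Gaussian-weighted measure $e^{-\frac{|x|^2}{2}}dV_M$, and one has $\cll f\,e^{-\frac{|x|^2}{2}}=\dv\!\big(e^{-\frac{|x|^2}{2}}\nabla f\big)$. The first step is to record that polynomial volume growth makes the total weighted volume $\int_M e^{-\frac{|x|^2}{2}}dV_M$ finite and, combined with the hypothesis that $|h|^2$ is bounded, supplies exactly the decay needed to integrate by parts with no contribution from infinity: for a function $f$ with $f$ and $|\nabla f|$ bounded, inserting logarithmic cut-offs $\varphi_R$ supported in geodesic balls $B_R$ and letting $R\to\infty$, the remainder $\int_M\langle\nabla f,\nabla\varphi_R\rangle e^{-\frac{|x|^2}{2}}dV_M$ tends to $0$ because the Gaussian factor outpaces the polynomial volume bound. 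Hence $\int_M\cll f\,e^{-\frac{|x|^2}{2}}dV_M=0$ for all such $f$.

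The engine is an elliptic identity for $\cos\theta$. Specialising the Chen--Li evolution formula for the \kr angle under mean curvature flow to the static rescaled flow represented by the shrinker, I expect
\[
\cll\cos\theta=-\cos\theta\,|\ol\nabla\mathbf J|^2,
\]
where $|\ol\nabla\mathbf J|^2\ge0$ is the squared length of the covariant derivative of the complex structure along the surface, a nonnegative quantity assembled from the second fundamental form and $\nabla\theta$. Obtaining this with the correct sign is where I would concentrate the computation: one differentiates the adapted-frame relations $\mathbf Je_1=\cos\theta\,e_2+\sin\theta\,e_3$ and $\mathbf Je_2=-\cos\theta\,e_1+\sin\theta\,e_4$, uses that $\mathbf J$ is parallel in $\bbc^2$, and substitutes $H=-x^\bot$ to rewrite the drift term $\langle x,\nabla\cos\theta\rangle$ in intrinsic terms.

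Granting the identity, note that $f=\cos\theta\le1$ is bounded and $|\nabla\cos\theta|\le C|h|$ is bounded by hypothesis, so the integration by parts of the first step applies and gives
\[
0=\int_M\cll\cos\theta\,e^{-\frac{|x|^2}{2}}dV_M=-\int_M|\ol\nabla\mathbf J|^2\cos\theta\,e^{-\frac{|x|^2}{2}}dV_M .
\]
Since $\cos\theta\ge\delta>0$ and the integrand is nonnegative, this forces $|\ol\nabla\mathbf J|^2\equiv0$; in particular $\theta$ is constant. Reading the relations $\ol\nabla\mathbf J=0$ off in the adapted frame then yields a homogeneous linear system in the components $h^\alpha_{ij}$ whose coefficients involve $\sin\theta$ and $\cos\theta$: when $0<\theta<\frac\pi2$ this system forces the second fundamental form to vanish, while the remaining case $\theta\equiv0$ is that of a complex curve, which is automatically minimal. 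In either situation $H\equiv0$, so $x^\bot\equiv0$, the position vector is everywhere tangent, and $x(M^2)$ is a smooth complete minimal cone through the origin, hence a plane.

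The principal obstacle is the combination of the two non-compact steps, which is exactly where the hypotheses are consumed: first, justifying that the weighted divergence theorem holds with vanishing boundary term (this needs both the polynomial volume growth and the boundedness of $|h|^2$, the latter to control $|\nabla\cos\theta|$); and second, the frame bookkeeping that extracts $H\equiv0$ from $|\ol\nabla\mathbf J|^2\equiv0$ without invoking any normal-curvature hypothesis --- the point that separates this self-shrinker statement from the translating-soliton theorem of Han--Sun quoted above.
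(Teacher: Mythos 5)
This statement is quoted from Arezzo--Sun and the paper gives no proof of it; but it is in fact subsumed by the paper's own Theorem \ref{thm1.8} (polynomial volume growth plus bounded $|h|^2$ gives $\int_M|h|^2e^{-|x|^2/2}dV_M<\infty$, and $\cos\theta\ge\delta>0$ rules out the Lagrangian alternative), and your strategy is essentially the one the paper uses there: the drift identity $\cll\cos\theta=-\tfrac14\cos\theta|\bar D\mathbf J_M|^2$ (Lemma 5.1, eq.\ (5.7)), weighted integration by parts (Lemma \ref{s2}, which replaces your cut-off argument), positivity of $\cos\theta$ to force $|\bar D\mathbf J_M|^2\equiv0$, and then the self-shrinker equation to conclude total geodesy. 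So the route is the right one and the hypotheses are consumed exactly where you say they are.

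One intermediate claim is wrong, though not fatally: $|\bar D\mathbf J_M|^2\equiv0$ does \emph{not} force the second fundamental form to vanish for $0<\theta<\pi/2$. It gives $h^4_{1i}=h^3_{2i}$ and $h^4_{2i}=-h^3_{1i}$, which together with the symmetry of $h$ only make both shape operators trace-free, i.e.\ $H\equiv0$; a nontrivial trace-free $h$ of the form $h^3=\left(\begin{smallmatrix}a&b\\ b&-a\end{smallmatrix}\right)$, $h^4=\left(\begin{smallmatrix}b&-a\\ -a&-b\end{smallmatrix}\right)$ is still allowed by these relations alone. The extra input needed is the shrinker equation: since $H=-x^\perp=0$, one has $\langle x,e_a\rangle=0$, and differentiating gives $\sum_jh^a_{ij}\langle x,e_j\rangle=0$, so $x^\top=x$ lies in the kernel of each shape operator; combined with $\tr A_{e_a}=0$ this kills $h$ (the paper's determinant argument), or equivalently one runs your minimal-cone argument. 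Since you do fall back on ``$H\equiv0$ plus the cone argument,'' the conclusion stands, but the sentence asserting that the linear system alone kills $h$ should be deleted. Two smaller points: your identity for $\cll\cos\theta$ should carry the normalization $\tfrac14$ with the paper's definition of $|\bar D\mathbf J_M|^2$, and the adapted frame \eqref{2.2} degenerates where $\sin\theta=0$, so the identity has to be extended to such points by the density/limit argument the paper gives in the proof of Lemma 5.1.
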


Recently Li-Wang proved the following theorem which can be viewed as a complement of the above theorems:

\begin{thm}[\cite{lw1}]\label{thm1.7} Let $x:M^2\rightarrow \bbc^2$ be a compact orientable Lagrangian self-shrinker, and $h$ be the second fundamental form of $x$. If $|h|^2\leq2$, then $|h|^2\equiv 2$ and $x(M^2)$ is the Clifford torus $S^1(1)\times S^1(1)$.
\end{thm}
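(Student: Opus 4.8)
The plan is to run a weighted Simons-type pinching argument built on the drift Laplacian attached to self-shrinkers. Write $\cll=\Delta-\langle x,\nabla\,\cdot\,\rangle$ for the Colding--Minicozzi operator; it is self-adjoint with respect to the Gaussian weight $e^{-\frac{|x|^2}{2}}dV_M$, and since $M^2$ is compact there are no boundary or growth issues, so $\int_M\cll f\,e^{-\frac{|x|^2}{2}}dV_M=0$ for every $f\in C^\infty(M)$. The idea is to produce a differential inequality for $|h|^2$ whose critical value is exactly $2$, integrate it against the weight, and then read off the rigidity.

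First I would derive a pointwise Simons-type identity. Combining the Codazzi equations, the Gauss equation and the self-shrinker relation $H=-x^\bot$ (the last being what injects the zeroth-order term), one arrives at the schematic formula
\[
\tfrac12\cll|h|^2=|\nabla h|^2+|h|^2-Q,
\]
where $Q$ collects the quartic contractions of $h$ together with the normal-curvature contributions. The coefficient $1$ in front of $|h|^2$ is fixed by the normalization $H=-x^\bot$; as a consistency check, on the Clifford torus $S^1(1)\times S^1(1)$ one has $\nabla h\equiv0$ and $|h|^2\equiv2$, forcing $Q\equiv2=\tfrac12|h|^4$ there.

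The heart of the matter, and the step I expect to be hardest, is the sharp estimate
\[
Q\le\tfrac12|h|^4,
\]
which is where the Lagrangian hypothesis must be used in full. Being Lagrangian forces the cubic form $C(X,Y,Z)=\langle h(X,Y),\mathbf JZ\rangle$ to be totally symmetric and identifies the normal bundle with the tangent bundle through $\mathbf J$; these two facts simplify the normal-curvature terms and sharpen the algebraic inequality for the quartic term precisely to the constant $\tfrac12$ in this codimension-two setting. Without the Lagrangian structure one only gets a larger constant, and the threshold would not be $2$; establishing the sharp constant, rather than a merely qualitative gap, is the delicate algebraic point. Substituting it gives
\[
\tfrac12\cll|h|^2\ge|\nabla h|^2+|h|^2\Big(1-\tfrac12|h|^2\Big).
\]

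Finally I would integrate against $e^{-\frac{|x|^2}{2}}dV_M$. Using $\int_M\cll|h|^2\,e^{-\frac{|x|^2}{2}}dV_M=0$ and the hypothesis $|h|^2\le2$ (so that $1-\tfrac12|h|^2\ge0$) yields
\[
0\ge\int_M\Big[|\nabla h|^2+|h|^2\Big(1-\tfrac12|h|^2\Big)\Big]e^{-\frac{|x|^2}{2}}dV_M\ge0,
\]
which forces $\nabla h\equiv0$ and $|h|^2(2-|h|^2)\equiv0$. Parallelism makes $|h|^2$ constant, so $|h|^2\equiv0$ or $|h|^2\equiv2$; the first case is totally geodesic, hence minimal and contained in an affine plane, which admits no compact example, a contradiction. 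Therefore $|h|^2\equiv2$. To conclude, I would invoke the classification of surfaces in $\bbc^2$ with parallel second fundamental form: these are symmetric and locally products, and imposing the Lagrangian and self-shrinker conditions forces both factors to be circles of radius one (the relation $H=-x^\bot$ pins the two radii), so $x(M^2)$ is the Clifford torus $S^1(1)\times S^1(1)$.
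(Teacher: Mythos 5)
This statement is quoted in the paper from \cite{lw1} and no proof of it is given there, so there is no in-paper argument to compare against; measured against the actual proof in \cite{lw1}, your outline has a genuine gap at precisely the step you flag as ``the heart of the matter.'' The pointwise estimate $Q\le\tfrac12|h|^4$ is false for Lagrangian surfaces in $\bbc^2$. With the Lagrangian frame $e_3=\mathbf Je_1$, $e_4=\mathbf Je_2$, the shape operators are encoded by the totally symmetric cubic form $C_{ijk}=\lagl h(e_i,e_j),\mathbf Je_k\ragl$, and total symmetry is the \emph{only} pointwise constraint: any symmetric $3$-tensor occurs. Take $C_{111}=a\neq0$ and all other components zero; then $A^3=\mathrm{diag}(a,0)$, $A^4=0$, so $|h|^2=a^2$, the commutator term vanishes, and $\sum_{\alpha,\beta}\bigl(\sum_{i,j}h^\alpha_{ij}h^\beta_{ij}\bigr)^2=a^4=|h|^4$, i.e.\ $Q=|h|^4$, not $\tfrac12|h|^4$. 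Equality of $Q$ with $\tfrac12|h|^4$ at the Clifford torus (your consistency check) does not make the inequality hold elsewhere. With the true pointwise bound $Q\le|h|^4$ your integral argument only closes under $|h|^2\le1$, which is the general-codimension gap theorem of Cao--Li and does not use the Lagrangian hypothesis at all; the threshold $2$ cannot be reached this way.

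This is exactly why the proof in \cite{lw1} is not a pointwise Simons-type pinching. There the Lagrangian structure is exploited through \emph{integral} identities: one combines the weighted Simons identities for $\cll|h|^2$ and $\cll|H|^2$ and uses the total symmetry of $C_{ijk}$ \emph{and of its covariant derivative} $\nabla C$ (a symmetric $4$-tensor on a surface) to control $\int|\nabla h|^2e^{-|x|^2/2}$ from below by a multiple of $\int|\nabla^\perp H|^2e^{-|x|^2/2}$, which is what upgrades the admissible pinching constant from $1$ to $2$ after integration. The bad algebraic configurations above are not excluded pointwise; they are ruled out only in the average. Your final step (parallel second fundamental form plus $|h|^2\equiv2$ implies the Clifford torus) is fine once $\nabla h\equiv0$ is established, but the mechanism you propose for establishing it does not work. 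To repair the argument you would need to replace the pointwise estimate on $Q$ by the integrated inequalities of \cite{lw1}, or by Castro--Lerma's variant which instead imposes a sign condition on the Gauss curvature.
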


\begin{rmk}\rm Castro and Lerma also proved Theorem \ref{thm1.7} in \cite{cl2} under the additional condition that the Gauss curvature $K$ of $M^{2}$ is either non-negative or non-positive.\end{rmk}

To extend the above theorems, we provide in this paper the following main results:

\begin{thm}\label{thm1.8} Let $x:M^2\to \bbc^2$ be a complete self-shrinker with \kr angle $\theta$ satisfying  $\cos\theta\geq0$. If
$$\int_M|h|^2e^{-\frac{|x|^2}{2}}dV_M<\infty,$$
then $\theta$ is constant and $x(M^2)$ is either a Lagrangian surface or a plane.\end{thm}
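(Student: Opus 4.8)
The plan is to run a weighted maximum-principle and integration-by-parts argument tailored to self-shrinkers, using the drift Laplacian $\cll f=\Delta f-\langle x^\top,\nabla f\rangle$, which for a self-shrinker ($H=-x^\perp$) is self-adjoint with respect to the Gaussian weight $e^{-\fr{|x|^2}2}dV_M$. The central computation I would carry out first is the second-order equation satisfied by $v:=\cos\theta$. This should be the self-shrinker analogue of the familiar evolution equation for the \kr angle along the symplectic mean curvature flow; since the ambient $\bbc^2$ is flat (so $\overline\nabla\mathbf J=0$ and there is no ambient-curvature term), I expect it to take the form
\be\label{planeq}\cll v=-v\,P,\qquad P\ge 0,\ee
where $P$ is a nonnegative quantity quadratic in the second fundamental form (morally $|\overline\nabla\mathbf J_M|^2$ in an adapted frame with $\mathbf Je_1=\cos\theta\,e_2+\sin\theta\,e_3$, etc.). The two structural features of $P$ that the argument needs are that $P\le C|h|^2$ pointwise, so that the hypothesis controls its weighted integral, and that $|\nabla\theta|^2\le C\,P$, so that $P\equiv0$ forces $\theta$ to be constant. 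Establishing \eqref{planeq} requires a moving-frame computation of $\Delta\cos\theta$ together with the self-shrinker identity $H=-x^\perp$ to convert the resulting first-order transport term into $-\langle x^\top,\nabla\cos\theta\rangle$.

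With \eqref{planeq} available and $v=\cos\theta\ge 0$, the equation reads $\cll v\le 0$, i.e. $v$ is $\cll$-superharmonic. The strong maximum principle then yields a clean dichotomy: if $v$ vanishes at some point, that point is an interior global minimum, so $v$ must be locally constant and hence (by connectedness) $v\equiv0$, meaning $x$ is Lagrangian with $\theta\equiv\fr\pi2$; otherwise $v>0$ everywhere.

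In the remaining case $v>0$, I would test \eqref{planeq} against $\phi^2 e^{-\fr{|x|^2}2}$ for a cutoff $\phi$ and integrate by parts, obtaining
\be\int_M\phi^2\,v\,P\,e^{-\fr{|x|^2}2}dV_M=2\int_M\phi\,\langle\nabla v,\nabla\phi\rangle\, e^{-\fr{|x|^2}2}dV_M.\ee
Since $|\nabla v|^2=\sin^2\theta\,|\nabla\theta|^2\le C\,P$ and $|v|\le 1$, a Cauchy--Schwarz estimate lets me absorb the right-hand side and bound the left by $C\int_M|\nabla\phi|^2 e^{-\fr{|x|^2}2}dV_M$. Because $v\,P\le C|h|^2$, the hypothesis $\int_M|h|^2e^{-\fr{|x|^2}2}dV_M<\infty$ guarantees the relevant weighted integrals are finite; choosing the standard Colding--Minicozzi logarithmic cutoffs exhausting $M$ then forces $\int_M v\,P\,e^{-\fr{|x|^2}2}dV_M=0$, whence $P\equiv0$. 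Finally, $P\equiv0$ gives $\nabla\theta\equiv0$, so $\theta$ is constant, while the vanishing of the $h$-part of $P$ (Cauchy--Riemann type relations such as $h^3_{1k}+h^4_{2k}=0$ and $h^4_{1k}-h^3_{2k}=0$ in the adapted frame) forces both components $H^3=h^3_{11}+h^3_{22}$ and $H^4=h^4_{11}+h^4_{22}$ of the mean curvature to vanish. Thus $x$ is minimal; a minimal self-shrinker satisfies $x^\perp=-H=0$, so it is a smooth complete cone through the origin, hence a plane. Together with the Lagrangian alternative, this gives the stated conclusion.

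The main obstacle is the very first step: deriving \eqref{planeq} with the correct sign and with an explicit, manifestly nonnegative $P$ that both controls $|\nabla\theta|^2$ and collapses to the above $h$-relations when it vanishes. A secondary technical point is making the integration by parts rigorous under only a weighted $L^2$ bound on $h$ rather than a pointwise bound, which is exactly what the Colding--Minicozzi cutoff machinery is designed to handle.
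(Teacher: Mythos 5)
Your proposal follows essentially the same route as the paper, and the identity you anticipate is exactly the paper's Lemma 5.1: $\cll\cos\theta=-\frac14\cos\theta\,|\bar D\mathbf J_M|^2$, so your $P$ is $\frac14|\bar D\mathbf J_M|^2=\sum_i\bigl((h^4_{1i}-h^3_{2i})^2+(h^3_{1i}+h^4_{2i})^2\bigr)$, and both structural properties you require do hold: $P\le 4|h|^2$, and $|\nabla\theta|^2=\sum_i(h^4_{1i}-h^3_{2i})^2\le P$ because $d\theta=\omega_1^4-\omega_2^3$ in the adapted frame. Deriving that identity is the moving-frame computation you describe (Codazzi together with the self-shrinker relation $H^\alpha_{,i}=\sum_jh^\alpha_{ij}\langle x,e_j\rangle$), and it is the one substantive piece you have deferred rather than a wrong step. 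Where you genuinely diverge is in the endgame: the paper integrates $\cll\cos\theta$ directly against the Gaussian weight by citing Li--Wei's integration-by-parts lemma (its hypotheses verified from $|\nabla\cos\theta|^2\le2|h|^2$ and $|\cll\cos\theta|\le4|h|^2$), obtains $\cos\theta\,|\bar D\mathbf J_M|^2\equiv0$, rules out a non-open zero set of $\cos\theta$ by a density argument, and finally shows the non-Lagrangian case is totally geodesic by an explicit computation using $\langle x,e_a\rangle=0$; your strong-maximum-principle dichotomy and the ``minimal self-shrinker is a smooth complete cone, hence a plane'' conclusion are clean, valid replacements for both of those steps. One small technical slip to fix: absorbing $2\int_M\phi\langle\nabla\phi,\nabla v\rangle e^{-|x|^2/2}dV_M$ into $\int_M\phi^2vP\,e^{-|x|^2/2}dV_M$ would require $|\nabla v|^2\le C\,vP$, which fails where $\cos\theta$ is small; but absorption is unnecessary, since $\int_M|\nabla v|^2e^{-|x|^2/2}dV_M\le2\int_M|h|^2e^{-|x|^2/2}dV_M<\infty$ lets you kill that term by Cauchy--Schwarz and dominated convergence --- which is precisely what the paper's Lemma 2.3 packages up.
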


To state the next main theorem, we need to introduce, as is done in \cite{cl1} and \cite{ct2}, an almost complex structure $\mathbf J_M$ on $\bbc^2$ along $x$. Let $\{e_1,e_2\}$ be an oriented orthonormal frame field on $M^2$ of which $\{\omega^i\}$ is the dual frame, and $\{e_{3},e_{4}\}$ be an oriented orthonormal normal frame field of $x$. Define
$$\mathbf J_M(x_*e_1)=x_*e_2,\quad  \mathbf J_M(x_*e_2)=-x_*e_1,\
\mathbf J_Me_3=-e_4,\quad \mathbf J_Me_4=e_3.$$
Then, if we write
$$h=\sum_{i,j}\left(h^3_{ij}\omega^i\omega^je_3 +h^4_{ij}\omega^i\omega^je_4\right),$$
then it holds that
\begin{align*}
(\bar D_{e_i}\mathbf J_M)(x_*e_1)=&\bar D_{e_i}(x_*e_2)-\mathbf J_M\bar D_{e_i}(x_*e_1)\\
=&-(h_{1i}^4-h_{2i}^3)e_3+(h_{2i}^4+h_{1i}^3)e_4,\\
(\bar D_{e_i}\mathbf J_M)(x_*e_2)=&-\bar D_{e_i}(x_*e_1)-\mathbf J_M\bar D_{e_i}(x_*e_2)\\
=&-(h_{2i}^4+h_{1i}^3)e_3-(h_{1i}^4-h_{2i}^3)e_4,\\
(\bar D_{e_i}\mathbf J_M)(e_3)=&-\bar D_{e_i}e_4-\mathbf J_M\bar D_{e_i}e_3\\
=&(h_{1i}^4-h_{2i}^3)(x_*e_1)+(h_{2i}^4+h_{1i}^3)(x_*e_2),\\
(\bar D_{e_i}\mathbf J_M)(e_4)=&\bar D_{e_i}e_3-\mathbf J_M\bar D_{e_i}e_4,\\
=&-(h_{2i}^4+h_{1i}^3)(x_*e_1)+(h_{1i}^4-h_{2i}^3)(x_*e_2).\\
\end{align*}
Consequently, we have
$$| \bar D \mathbf J_M|^2=4\sum_{i}\left((h_{2i}^4+h_{1i}^3)^2+(h_{1i}^4-h_{2i}^3)^2\right).$$

Note that $| \bar D \mathbf J_M|^2$ is globally defined on $M^2$ and thus is free of the choice of $\{e_1,e_2\}$ and $\{e_3,e_4\}$.

\begin{thm}\label{thm1.9} Let $x:M^2\to \bbc^2$ be a complete self-shrinker with \kr angle  $\theta$. If $$\int_M|h|^2e^{-\frac{|x|^2}{2}}dV_M<\infty$$
and there exists a number $\lambda\in [0,1)$ such that
\be |\nabla\theta|^2\leq\frac{\lambda\cos^2\theta|\bar D J_M|^2}{4(1-\lambda\cos^2\theta)}\label{a},\ee then $\theta$ is constant and $x(M^2)$ is either a Lagrangian surface or a plane.\end{thm}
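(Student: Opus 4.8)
The plan is to reduce the theorem to a single Gaussian-weighted integral identity for $\cos^2\theta$ and then feed in the pinching hypothesis \eqref{a}. First I would establish the fundamental drift-Laplacian equation for the \kr angle of a self-shrinker. Write $\mathcal L=\Delta-\langle x,\nabla\,\cdot\,\rangle$ for the Bakry--Émery operator attached to the weight $e^{-|x|^2/2}$, so that $\mathcal Lg\,e^{-|x|^2/2}=\dv(e^{-|x|^2/2}\nabla g)$ since $\langle x,\nabla g\rangle=\langle\nabla(\tfrac12|x|^2),\nabla g\rangle$. The claim is $\mathcal L\cos\theta=-\tfrac14|\bar D\mathbf J_M|^2\cos\theta$. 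I would obtain this by taking the evolution equation $(\partial_t-\Delta)\cos\theta=\tfrac14|\bar D\mathbf J_M|^2\cos\theta$ of the symplectic mean curvature flow in the Ricci-flat $\bbc^2$ and transporting it to the self-shrinker along the self-similar solution $F_t=\sqrt{-2t}\,x$: since $\cos\theta$ is scale-invariant its derivative at fixed parameter vanishes, while the tangential velocity $-x^\top$ contributes $-\langle x,\nabla\cos\theta\rangle$, and the two combine to $\mathcal L\cos\theta+\tfrac14|\bar D\mathbf J_M|^2\cos\theta=0$ (alternatively one differentiates $\cos\theta=\langle\mathbf Jx_*e_1,x_*e_2\rangle$ twice using Codazzi and $H=-x^\bot$). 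I would also record the elementary identity $e_i(\theta)=h^4_{1i}-h^3_{2i}$, whence, comparing with the displayed formula for $|\bar D\mathbf J_M|^2$, one gets $\tfrac14|\bar D\mathbf J_M|^2=|\nabla\theta|^2+\sum_i(h^4_{2i}+h^3_{1i})^2$, a fact needed only at the very end.

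From the fundamental equation and $|\nabla\cos\theta|^2=\sin^2\theta\,|\nabla\theta|^2$ I compute $\mathcal L\cos^2\theta=2\cos\theta\,\mathcal L\cos\theta+2|\nabla\cos\theta|^2=-\tfrac12|\bar D\mathbf J_M|^2\cos^2\theta+2(1-\cos^2\theta)|\nabla\theta|^2$. Integrating against $e^{-|x|^2/2}$ is a weighted Stokes computation: with a cutoff $\phi_r$ the error is controlled by $r^{-1}(\int|\nabla\cos^2\theta|^2e^{-|x|^2/2})^{1/2}(\int e^{-|x|^2/2})^{1/2}$, and since $|\nabla\cos^2\theta|\le|\nabla\theta|$ and both $|\nabla\theta|$ and $|\bar D\mathbf J_M|$ are bounded by a multiple of $|h|$, the hypothesis $\int_M|h|^2e^{-|x|^2/2}<\infty$ makes every integral finite and yields $\int_M\mathcal L\cos^2\theta\,e^{-|x|^2/2}=0$, i.e. $\int_M\tfrac12|\bar D\mathbf J_M|^2\cos^2\theta\,e^{-|x|^2/2}=\int_M2(1-\cos^2\theta)|\nabla\theta|^2e^{-|x|^2/2}$.

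Now I insert \eqref{a}. Since $\lambda<1$ the denominator $1-\lambda\cos^2\theta$ is positive, and \eqref{a} is equivalent to $\tfrac14|\bar D\mathbf J_M|^2\cos^2\theta\ge\tfrac{1-\lambda\cos^2\theta}{\lambda}|\nabla\theta|^2$. Substituting this lower bound for the left-hand integrand, the $\cos^2\theta$-terms cancel identically and leave $0\ge\tfrac{2(\lambda-1)}{\lambda}\int_M|\nabla\theta|^2e^{-|x|^2/2}$; as $(\lambda-1)/\lambda<0$ this forces $\nabla\theta\equiv0$, so $\theta$ is constant. With $\theta$ constant the identity above reads $\int_M|\bar D\mathbf J_M|^2\cos^2\theta\,e^{-|x|^2/2}=0$: either $\cos\theta\equiv0$ and $x$ is Lagrangian, or $\cos\theta$ is a non-zero constant and then $|\bar D\mathbf J_M|\equiv0$. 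In the latter case the vanishing of every $h^4_{1i}-h^3_{2i}$ and $h^4_{2i}+h^3_{1i}$ forces $h^3$ and $h^4$ to be trace-free, so $H\equiv0$; the self-shrinker equation then gives $x^\bot\equiv0$, so $x(M^2)$ is a minimal cone and hence a plane.

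I expect the genuine difficulty to be analytic rather than algebraic: the cancellation in the third step is immediate once the fundamental equation is correct, but justifying $\int_M\mathcal L\cos^2\theta\,e^{-|x|^2/2}=0$ on a complete, possibly non-proper, non-compact self-shrinker requires the finiteness of the weighted volume $\int_Me^{-|x|^2/2}$ together with a controlled cutoff, which is exactly where the structure theory of complete self-shrinkers (polynomial volume growth / finite Gaussian weight) must enter and not merely the stated curvature bound. The other delicate point is pinning down the coefficient $\tfrac14$ in $\mathcal L\cos\theta=-\tfrac14|\bar D\mathbf J_M|^2\cos\theta$, since any sign or factor error there would destroy the exact cancellation on which the argument turns.
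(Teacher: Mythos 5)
Your proposal is correct and follows essentially the same route as the paper: the drift-Laplacian identities $\mathcal L\cos\theta=-\tfrac14\cos\theta|\bar D\mathbf J_M|^2$ and $\tfrac12\mathcal L\cos^2\theta=\sin^2\theta|\nabla\theta|^2-\tfrac14\cos^2\theta|\bar D\mathbf J_M|^2$ (which the paper establishes by exactly the direct Codazzi/moving-frames computation you mention as the alternative to the flow-transport heuristic), the weighted integration by parts justified by $\int_M|h|^2e^{-|x|^2/2}dV_M<\infty$, and the cancellation forced by \eqref{a}. Two trivial repairs: the case $\lambda=0$ must be handled separately before you divide by $\lambda$ (it is immediate, since \eqref{a} then reads $|\nabla\theta|^2\le0$), and your final inequality should be $0\le\tfrac{2(\lambda-1)}{\lambda}\int_M|\nabla\theta|^2e^{-|x|^2/2}dV_M$ rather than $\ge$ (as written it is vacuous); with the correct sign your conclusion $\nabla\theta\equiv0$ follows, and your way of then reusing the integral identity to get $\cos^2\theta|\bar D\mathbf J_M|^2\equiv0$ actually streamlines the paper's pointwise argument, which instead deduces $\mathcal L\cos^2\theta\equiv0$ and runs a density argument on the zero set of $\cos\theta$.
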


\begin{cor}\label{cor1.10}
Let $x:M^2\to \bbc^2$ be a compact orientable self-shrinker with \kr angle $\theta$ satisfying either $\cos\theta\geq 0$ or
$$
|\nabla\theta|^2\leq\frac{\lambda\cos^2\theta|\bar D J_M|^2}{4(1-\lambda\cos^2\theta)}
$$
for some $\lambda\in [0,1)$. If $|h|^2\leq2$, then
$$\theta=\frac{\pi}{2},\quad |h|^2=2$$
and $x(M^2)$ is the Clifford torus $S^1(1)\times S^1(1)$.
\end{cor}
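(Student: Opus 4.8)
The plan is to derive this corollary directly by assembling the two main pinching theorems together with the Li--Wang rigidity result, Theorem \ref{thm1.7}. The crucial preliminary observation is that, since $M^2$ is compact, every continuous function on $M^2$ is bounded and the weighted measure $e^{-|x|^2/2}dV_M$ has finite total mass; in particular the integrability hypothesis
$$\int_M|h|^2e^{-\frac{|x|^2}{2}}dV_M<\infty$$
demanded by both Theorem \ref{thm1.8} and Theorem \ref{thm1.9} is automatically met, and a compact manifold is trivially complete. Thus I need only check which of the two analytic alternatives in the hypothesis is in force and feed it into the appropriate theorem.

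First I would split according to the disjunction in the statement. If $\cos\theta\geq0$, then all assumptions of Theorem \ref{thm1.8} hold, so $\theta$ is constant and $x(M^2)$ is either a Lagrangian surface or a plane. If instead the pinching inequality \eqref{a} holds for some $\lambda\in[0,1)$, then the hypotheses of Theorem \ref{thm1.9} are satisfied and the identical conclusion follows. In either case one arrives at the same intermediate dichotomy: $\theta$ is constant and $x(M^2)$ is Lagrangian or a plane.

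Next I would eliminate the planar alternative by a topological argument. A plane in $\bbc^2$ is noncompact, whereas $M^2$ is compact by assumption; hence $x(M^2)$ cannot be a plane. Therefore $x$ is a Lagrangian surface of constant \kr angle, which forces $\theta\equiv\frac{\pi}{2}$. With $\theta\equiv\frac{\pi}{2}$ the immersion $x$ becomes a compact orientable Lagrangian self-shrinker, and the standing hypothesis $|h|^2\leq2$ remains in force.

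Finally, applying Theorem \ref{thm1.7} of Li--Wang to this Lagrangian self-shrinker yields $|h|^2\equiv2$ and identifies $x(M^2)$ with the Clifford torus $S^1(1)\times S^1(1)$, completing the argument. The proof carries essentially no hard analytic content of its own, since all the heavy lifting resides in the three quoted theorems; the only points deserving care are verifying that the finite-weighted-integral normalization is inherited from compactness so that the pinching theorems apply, and the elementary but indispensable exclusion of the noncompact planar case, after which the \emph{constant} \kr angle necessarily equals $\frac{\pi}{2}$ and the rigidity theorem takes over.
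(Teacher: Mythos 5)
Your proposal is correct and follows exactly the paper's own route: compactness gives the finite weighted integral and excludes the planar alternative, so Theorem \ref{thm1.8} or \ref{thm1.9} forces the Lagrangian case, and Theorem \ref{thm1.7} finishes. The paper's proof is just a terser version of the same argument, leaving the integrability check and the exclusion of the plane implicit.
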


{\em Proof of Corollary \ref{cor1.10}}\,: Since $M^2$ is compact, by Theorem \ref{thm1.8} or Theorem \ref{thm1.9}, we know that $x:M^2\to\bbc^2$ is a Lagrangian surface. Then it follows from Theorem \ref{thm1.7} that $x(M^2)$ must be the Clifford torus $S^1(1)\times S^1(1)$.\endproof

\section{\kr angles of surfaces and self-shrinkers of mean curvature flow}

In this section, we recall some necessary basics and formulas about the \kr angle and self-shrinkers. First we make the following convention for the ranges of indices which are to be agreed with, if no other specification, throughout this present paper:
$$1\leq i, j, k\leq2,\quad 3\leq a, b, c\leq4,\quad 1\leq A, B, C\leq4.$$

On $\bbc^2$ we use the standard complex coordinates
$$z_1=x_1+\ima y_1,\quad z_2=x_2+\ima y_2.$$
The canonical complex structure $\mathbf{J}$ on $\bbc^2$ is given by
$$\mathbf J\frac{\partial}{\partial x_i}=\frac{\partial}{\partial y_i},\quad \mathbf J\frac{\partial}{\partial y_i}=-\frac{\partial}{\partial x_i}.$$

Let $x: M^2 \to\bbc^2$ be an immersed surface and $\omega$, $\langle\cdot,\cdot\rangle$ be the \kr form and the Riemannian metric on $\bbc^2$, respectively. The \kr angle $\theta$ of $x$ in $\bbc^2$ is defined by \eqref{1.2} which is equivalent to \be
\cos\theta=\lagl \mathbf J(x_*e_1),x_*e_2\ragl,\label{2.1}
\ee where $\{e_1,e_2\}$ is an oriented orthonormal frame field on $M^2$.

Starting from any given $\{e_1,e_2\}$ on $M^2$, we can construct along $x$ an orthonormal frame field $\{x_*e_1,x_*e_2,e_{3},e_{4}\}$ of $\bbc^2$, such that the following are satisfied in case $\theta\neq 0,\pi$ \cite{ll}:
\be\begin{aligned}&\mathbf{J}(x_*e_1)=(x_*e_2)\cos\theta +e_{3}\sin\theta,\quad
\mathbf{J}(x_*e_2)=-(x_*e_1)\cos\theta +e_{4}\sin\theta,\\
&\mathbf{J}(e_{3})=-(x_*e_1)\sin\theta -e_{4}\cos\theta,\quad
\mathbf{J}(e_{4})=-(x_*e_2)\sin\theta +e_{3}\cos\theta.\end{aligned}\label{2.2}
\ee
Denote by $D$, $\bar D$ and $D^\bot$ the Levi-Civita connections on $M^2$, $\bbc^2$ and the normal bundle, respectively. Then the formulas of Gauss and Weingarten are given by
$$\bar D_X(x_*Y)= x_*(D_XY)+h(X,Y),\quad \bar D_X\xi=-x_*(A_\xi X)+ D^\bot_X\xi,$$
where $X$ and $Y$ are tangent vector fields, $\xi$ is a normal vector field on $x$, $h$ denotes the second fundamental form and $A$ is the shape operator.

In what follows we identify $M^2$ with $x(M^2)$ and omit $x_*$ from some formulas and equations.

Let $\{\omega^A\}$ be the dual frame field of $\{e_A\}$, and $\{\omega_A^B\}$ the components of the Levi-Civita connection of $\bbc^2$ with respect to $\{e_A\}$. Then a direct computation similar to that in \cite{ll} using \eqref{2.2} shows

\begin{lem}[cf. \cite{ll}]\label{ch} If $\sin\theta\neq0$, then the following identities hold for an immersed surface $x:M^2\to \bbc^2$:
\be d\theta=\omega_1^4-\omega_2^3,\label{2.3}\ee
\be(\omega_1^3+\omega_2^4)\cos\theta+(\omega_3^4-\omega_1^2)\sin\theta=0.\label{2.4}\ee\end{lem}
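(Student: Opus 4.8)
The plan is to exploit the single structural fact that the canonical complex structure $\mathbf J$ of $\bbc^2$ is parallel with respect to the flat connection $\bar D$ (in the standard coordinates $\mathbf J$ has constant coefficients), so that $\bar D_X(\mathbf J Y)=\mathbf J(\bar D_X Y)$ for every vector field $Y$. Combining this with the frame relations \eqref{2.2}, the antisymmetry $\omega_A^B=-\omega_B^A$ of the connection forms of the orthonormal frame $\{e_A\}=\{x_*e_1,x_*e_2,e_3,e_4\}$ (so that $\omega_A^A=0$), and the definition $\bar D_X e_A=\sum_B\omega_A^B(X)\,e_B$, the two identities \eqref{2.3} and \eqref{2.4} should drop out by a single covariant differentiation.

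Concretely, I would covariantly differentiate the first relation in \eqref{2.2}, namely $\mathbf J(x_*e_1)=(x_*e_2)\cos\theta+e_3\sin\theta$. On the left, parallelism gives $\bar D_X(\mathbf J x_*e_1)=\mathbf J(\bar D_X x_*e_1)=\sum_B\omega_1^B(X)\,\mathbf J e_B$, into which I re-substitute all four lines of \eqref{2.2}; after dropping $\omega_1^1=0$ this expresses the left-hand side as an explicit combination of $e_1,e_2,e_3,e_4$ whose coefficients are linear in the forms $\omega_1^B$ and in $\cos\theta,\sin\theta$. On the right, the Leibniz rule produces $(\bar D_X e_2)\cos\theta+(\bar D_X e_3)\sin\theta$ together with the terms $-e_2\sin\theta\,d\theta(X)+e_3\cos\theta\,d\theta(X)$ coming from differentiating the coefficients, and expanding $\bar D_X e_2,\bar D_X e_3$ through the connection forms again yields a combination of the $e_A$.

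The final step is to equate the two sides coefficient by coefficient in the frame $\{e_A\}$. The $e_1$-components cancel identically. The $e_2$-component reads $-\omega_1^4\sin\theta=-\sin\theta\,d\theta-\omega_2^3\sin\theta$, and dividing by $\sin\theta\neq0$ — which is exactly where the hypothesis enters — gives \eqref{2.3}. The $e_4$-component gives $\omega_1^2\sin\theta-\omega_1^3\cos\theta=\omega_2^4\cos\theta+\omega_3^4\sin\theta$, which after regrouping is precisely \eqref{2.4}. The $e_3$-component merely reproduces \eqref{2.3} (and is vacuous when $\cos\theta=0$), while differentiating the remaining relations of \eqref{2.2} yields nothing new and only serves as a consistency check. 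There is no genuine analytic difficulty here; the only real work is careful bookkeeping of the four connection forms and the signs in \eqref{2.2}, and the sole subtlety is that the division by $\sin\theta$ is what makes the hypothesis $\sin\theta\neq0$ indispensable.
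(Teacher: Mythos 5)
Your proof is correct and is exactly the computation the paper has in mind: the paper gives no details, stating only that the lemma follows from ``a direct computation similar to that in \cite{ll} using \eqref{2.2}'', and your differentiation of the first relation of \eqref{2.2} via $\bar D\mathbf J=\mathbf J\bar D$ and comparison of frame components is that computation, with the $e_2$- and $e_4$-components yielding \eqref{2.3} and \eqref{2.4} as you state. The only remark worth adding is that $\sin\theta\neq0$ is needed not only for the division in the $e_2$-component but already for the adapted frame $\{e_3,e_4\}$ in \eqref{2.2} to be defined at all.
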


Note that $\omega_i^a=\sum_j h_{ij}^a\omega^j$, where $h_{ij}^a$ are components of the second fundamental form $h$ of $x$. Then by \eqref{2.3}, the \kr angle $\theta$ of $x$ is constant if and only if
\be\label{rch}
h_{11}^4=h_{12}^3,\quad h_{12}^4=h_{22}^3.
\ee

Now we turn to self-shrinkers of the mean curvature flow in Euclidean space.

Let $x:M^m\to \mathbb{R}^{m+p}$ be an immersed $n$-dimensional submanifold in the $(m+p)$-dimensional Euclidean space with the mean curvature vector $H$. Then $x$ is called a {\em self-shrinker} (of the mean curvature flow) if $H=-x^\bot $, where $^\bot$ denotes the orthogonal projection to the normal bundle of $x$.

For a self-shrinker $x:M^m\to \mathbb{R}^{m+p}$, there is an important operator $\mathcal{L}$ acting on smooth functions that was first introduced and used by Colding and Minicozzi (\cite{cm}) to study self-shrinkers. The definition of $\mathcal{L}$ is as follows:
\be
\mathcal{L}=\Delta-\lagl x,\nabla\cdot\ragl=e^{\frac{|x|^2}{2}}\dv(e^{-\frac{|x|^2}{2}} \nabla\cdot),\ee where $\Delta$, $\nabla$ and $\dv$ denote the Laplacian, gradient and divergence on $M^m$, respectively.

Given an orthonormal tangent frame field $\{e_i;\ 1\leq i\leq m\}$ on $M^m$ with the dual $\{\omega^i\}$ and a normal frame field $\{e_\alpha;\ m+1\leq \alpha\leq m+p\}$, write $h=\sum_{\alpha,i,j}h^\alpha_{ij}\omega^i\omega^je_\alpha$. Then the mean curvature vector is by definition
$H=\sum_\alpha H^\alpha e_\alpha$ with $H^\alpha=\sum_ih^\alpha_{ii}$.

The following lemma is fundamental for study of self-shrinkers.

\begin{lem} [\cite{cl}]\label{s1} Let $x:M^m\to \mathbb{R}^{m+p}$ be a self-shrinker. Then
\be H_{,i}^\alpha=\sum_j h_{ij}^\alpha\lagl x,e_j\ragl.\label{2.20}\ee
\end{lem}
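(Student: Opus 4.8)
The plan is to read off the components $H^\alpha_{,i}$ directly from the self-shrinker equation $H=-x^\bot$, which in components reads $H^\alpha=-\lagl x,e_\alpha\ragl$, where $x^\bot=\sum_\alpha\lagl x,e_\alpha\ragl e_\alpha$ denotes the normal part of the position vector. The essential structural input is that the ambient space $\bbr^{m+p}$ is flat, so that under the identification of $M^m$ with $x(M^m)$ the ambient covariant derivative of the position vector is simply $\bar D_{e_i}x=e_i$.

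First I would decompose $x=x^\top+x^\bot$ with $x^\top=\sum_j\lagl x,e_j\ragl e_j$, and differentiate this identity along $e_i$ using $\bar D$. Since $\bar D_{e_i}x=e_i$ is purely tangential, taking normal components gives $(\bar D_{e_i}x^\top)^\bot+D^\bot_{e_i}x^\bot=0$. Differentiating $x^\top$ and invoking the Gauss formula $\bar D_{e_i}e_j=D_{e_i}e_j+h(e_i,e_j)$, the only normal contribution comes from the second fundamental form, so that $(\bar D_{e_i}x^\top)^\bot=\sum_j\lagl x,e_j\ragl h(e_i,e_j)$. Hence $D^\bot_{e_i}x^\bot=-\sum_j\lagl x,e_j\ragl h(e_i,e_j)$, and since $H=-x^\bot$ one obtains $D^\bot_{e_i}H=\sum_j\lagl x,e_j\ragl h(e_i,e_j)$. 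Projecting onto $e_\alpha$ and using $\lagl h(e_i,e_j),e_\alpha\ragl=h^\alpha_{ij}$ then yields exactly $H^\alpha_{,i}=\sum_j h^\alpha_{ij}\lagl x,e_j\ragl$, which is \eqref{2.20}.

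An equivalent and perhaps more natural route, given the index conventions already fixed in this section, is to differentiate $H^\alpha=-\lagl x,e_\alpha\ragl$ componentwise. Expanding $H^\alpha_{,i}$ through the normal connection and applying the Weingarten formula $\bar D_{e_i}e_\alpha=-x_*(A_{e_\alpha}e_i)+D^\bot_{e_i}e_\alpha$ together with $\lagl e_i,e_\alpha\ragl=0$, the shape-operator term produces $\sum_j h^\alpha_{ij}\lagl x,e_j\ragl$, while all normal-connection terms cancel in pairs via the skew-symmetry $\lagl D^\bot_{e_i}e_\alpha,e_\beta\ragl+\lagl D^\bot_{e_i}e_\beta,e_\alpha\ragl=0$ coming from metric compatibility. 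There is no deep obstacle here: the computation is entirely elementary, and the only point requiring care is the consistent separation of tangential and normal parts, so that the normal-connection coefficients are correctly tracked and seen to drop out.
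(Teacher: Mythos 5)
Your proof is correct. Note that the paper itself offers no argument for this lemma --- it is quoted verbatim from Cao--Li \cite{cl} --- so there is no in-paper proof to compare against; both of your routes are the standard computation and both check out. The first (differentiating the decomposition $x=x^{\top}+x^{\bot}$, using $\bar D_{e_i}x=x_*e_i$ and reading off the normal component of the Gauss formula) is exactly the argument in the cited source; the second (differentiating $H^{\alpha}=-\lagl x,e_{\alpha}\ragl$ and watching the normal-connection terms cancel against the $\sum_{\beta}H^{\beta}\lagl D^{\bot}_{e_i}e_{\beta},e_{\alpha}\ragle$ contribution in $H^{\alpha}_{,i}$) is an equivalent componentwise version; the cancellation you describe does occur, since $\sum_{\beta}H^{\beta}\lagl D^{\bot}_{e_i}e_{\beta},e_{\alpha}\ragl=\lagl x^{\bot},D^{\bot}_{e_i}e_{\alpha}\ragl$ exactly offsets the term $-\lagl x^{\bot},D^{\bot}_{e_i}e_{\alpha}\ragl$ arising from $e_i(H^{\alpha})$.
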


We end this section with another lemma which is essential to our argument.

\begin{lem} [\cite{lw2}]\label{s2} Let $x: M^m\to \mathbb{R}^{m+p}$ be a complete immersed submanifold. If $u$ and $v$ are $C^2$-smooth functions with
$$\int_M(|u\nabla v|+|\nabla u\nabla v|+|u\mathcal{L} v|)e^{-\frac{|x|^2}{2}}dV_M< \infty,$$
then it holds that
$$\int_M u\mathcal{L}v e^{-\frac{|x|^2}{2}}dV_M=-\int_M\lagl \nabla u, \nabla v\ragl e^{-\frac{|x|^2}{2}}dV_M.$$\end{lem}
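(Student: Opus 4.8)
The plan is to recognize this lemma as a weighted integration-by-parts (self-adjointness) statement for the drift Laplacian $\mathcal{L}$, where the only genuine content is justifying the vanishing of the boundary term at infinity on the noncompact manifold $M^m$. I would start from the divergence form $\mathcal{L}=e^{\fr{|x|^2}{2}}\dv(e^{-\fr{|x|^2}{2}}\nabla\cdot)$ already recorded in the excerpt. Applying the Leibniz rule for the divergence to the vector field $W:=u\,e^{-\fr{|x|^2}{2}}\nabla v$ gives the pointwise identity
$$\dv\big(u\,e^{-\fr{|x|^2}{2}}\nabla v\big)=\big(u\mathcal{L}v+\lagl\nabla u,\nabla v\ragl\big)e^{-\fr{|x|^2}{2}}.$$
If $M^m$ were compact this would finish the proof at once by the divergence theorem; the whole difficulty is to integrate this identity over a complete, possibly noncompact $M^m$ and still discard the total-divergence term.

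To handle the noncompactness I would introduce a standard family of Lipschitz cutoff functions. Since $x:M^m\to\bbr^{m+p}$ is complete, Hopf--Rinow guarantees that every geodesic ball $B_R$ centered at a fixed point $p_0$ is relatively compact, and the distance function $\rho=d(\cdot,p_0)$ is Lipschitz with $|\nabla\rho|\le 1$ almost everywhere. Fixing $\eta\in C^\infty(\bbr)$ with $\eta\equiv 1$ on $[0,1]$, $\eta\equiv 0$ on $[2,\infty)$ and $|\eta'|\le C$, set $\phi_R=\eta(\rho/R)$, so that $0\le\phi_R\le1$, $\phi_R\to 1$ pointwise as $R\to\infty$, and $|\nabla\phi_R|\le \fr{C}{R}$ is supported in $B_{2R}\setminus B_R$. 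Multiplying the pointwise identity by $\phi_R$, integrating over $M^m$, and integrating by parts (legitimate because $\phi_R W$ is compactly supported and Lipschitz) yields
$$\int_M\phi_R\big(u\mathcal{L}v+\lagl\nabla u,\nabla v\ragl\big)e^{-\fr{|x|^2}{2}}dV_M=-\int_M u\,\lagl\nabla\phi_R,\nabla v\ragl e^{-\fr{|x|^2}{2}}dV_M.$$

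Finally I would let $R\to\infty$. On the left, the integrand is dominated in absolute value by $(|u\mathcal{L}v|+|\nabla u\nabla v|)e^{-\fr{|x|^2}{2}}$, which is integrable by hypothesis, so dominated convergence produces the limit $\int_M(u\mathcal{L}v+\lagl\nabla u,\nabla v\ragl)e^{-\fr{|x|^2}{2}}dV_M$. On the right, the gradient bound and the support condition give $\big|\int_M u\,\lagl\nabla\phi_R,\nabla v\ragl e^{-\fr{|x|^2}{2}}dV_M\big|\le \fr{C}{R}\int_M |u\nabla v|e^{-\fr{|x|^2}{2}}dV_M$, which tends to $0$ precisely because $\int_M|u\nabla v|e^{-\fr{|x|^2}{2}}dV_M<\infty$; rearranging the limiting identity then gives the claim. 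The main obstacle, and the sole reason the three integrability hypotheses are imposed, is controlling this boundary term at infinity: the first condition $\int_M|u\nabla v|e^{-\fr{|x|^2}{2}}dV_M<\infty$ is exactly what kills the cutoff error, while the other two ensure the left-hand limit exists. A minor technical point to verify carefully is the use of the divergence theorem for the merely Lipschitz (rather than smooth) field $\phi_R W$, which can be settled either by a routine mollification of $\phi_R$ or by the standard Stokes theorem for compactly supported Lipschitz forms.
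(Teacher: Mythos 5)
Your proof is correct. The paper itself gives no proof of this lemma --- it is quoted from \cite{lw2} --- and your argument (the divergence identity $\dv\bigl(u\,e^{-\frac{|x|^2}{2}}\nabla v\bigr)=\bigl(u\mathcal{L}v+\langle\nabla u,\nabla v\rangle\bigr)e^{-\frac{|x|^2}{2}}$ combined with geodesic-ball cutoffs, using the first integrability hypothesis to kill the $O(1/R)$ boundary term and the other two for dominated convergence) is exactly the standard proof of this weighted integration-by-parts formula, essentially as it appears in the cited reference.
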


\section{The Lagrangian angle for immersed surfaces in $\bbc^2$}

In this section, we first define a natural extension of the concept of Lagrangian angles to general immersed surfaces in $\bbc^2$, and then provide a proof of Theorem \ref{thm1.2}.

Given such a surface $x:M^2\to\bbc^2$ with an orthonormal tangent frame field $\{e_1,e_2\}$ and its dual $\{\omega^1,\omega^2\}$. As mentioned in the introduction, let $\Omega$ denote the holomorphic volume form on $\bbc^2$ given by $\Omega=dz_1\wedge dz_2$, where $z_1=x_1+\ima y_1$, $z_2=x_2+\ima y_2$. 

\begin{lem}\label{lem3.1}
When pulling back by $x$, $\Omega$ is a complex multiple of the volume form $dV_M$. More precisely, there exists a complex function $\eta_x$ such that $x^*\Omega=\eta_x dV_M$.
\end{lem}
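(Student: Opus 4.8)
The plan is to reduce the statement to a pointwise fact of linear algebra. The holomorphic volume form $\Omega=dz_1\wedge dz_2$ is a complex-valued $2$-form on $\bbc^2\cong\bbr^4$, and hence its pullback $x^*\Omega$ is a complex-valued $2$-form on the surface $M^2$. Since $\dim_{\bbr}M^2=2$, the bundle $\Lambda^2T^*M^2$ has real rank one, so its complexification $\Lambda^2T^*M^2\otimes\bbc$ has complex rank one. The Riemannian volume form $dV_M$ is a nowhere-vanishing real section of $\Lambda^2T^*M^2$, hence a nowhere-vanishing section of the complexified bundle as well; therefore it spans $\Lambda^2T^*M^2\otimes\bbc$ over $\bbc$ at every point. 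Consequently $x^*\Omega$ must be a pointwise complex multiple of $dV_M$, which is exactly the assertion.

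To make the coefficient explicit, I would fix an oriented orthonormal tangent frame $\{e_1,e_2\}$ with dual coframe $\{\omega^1,\omega^2\}$, so that $dV_M=\omega^1\wedge\omega^2$, and set
$$\eta_x:=(x^*\Omega)(e_1,e_2)=\Omega(x_*e_1,x_*e_2).$$
Evaluating both sides of $x^*\Omega=\eta_x\,dV_M$ on $(e_1,e_2)$ confirms this choice. Since $dV_M$ is smooth and nowhere zero while $x^*\Omega$ is smooth, the quotient $\eta_x=x^*\Omega/dV_M$ is a well-defined smooth complex function; moreover, given the orientation, it is independent of the chosen frame, because any two oriented orthonormal frames differ by an $SO(2)$ rotation that preserves $\omega^1\wedge\omega^2$, while $x^*\Omega$ is frame-free by construction.

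The lemma itself carries essentially no analytic difficulty — it is a dimension count together with the non-degeneracy of $dV_M$ — so the only genuine point requiring care is the bookkeeping: confirming smoothness and frame-independence of $\eta_x$. The substantive computation, which I expect to be carried out next in order to define the extended Lagrangian angle, is to evaluate $\eta_x$ explicitly using the adapted frame relations \eqref{2.2}. A short calculation of this kind should yield $|\eta_x|=\sin\theta$, so that on the locus $\sin\theta\neq0$ one may write $x^*\Omega=\sin\theta\,e^{\ima\beta_x}dV_M$ and recover the extended Lagrangian angle $\beta_x$ as the argument of $\eta_x$; this is consistent with the Lagrangian case $\theta\equiv\frac{\pi}{2}$, where $|\eta_x|=1$, and with the complex (holomorphic) case $\theta\equiv0$, where $x^*\Omega\equiv0$.
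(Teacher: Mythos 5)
Your argument is correct and is essentially the paper's own proof: the paper simply observes that $dV_M=\omega^1\wedge\omega^2$ and sets $\eta_x=(x^*\Omega)(e_1,e_2)=\Omega(x_*e_1,x_*e_2)$, which is exactly your dimension-count-plus-evaluation reasoning made explicit. Your additional remarks on frame-independence and the anticipated identity $|\eta_x|=\sin\theta$ are consistent with what the paper establishes afterward.
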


\proof In fact, $\eta_x=x^*\Omega(e_1,e_2)=\Omega(x_*e_1,x_*e_2)$ since $dV_M=\omega^1\wedge\omega^2$.\endproof

Therefore, when $x^*\Omega\neq 0$, or the same, $\eta_x\neq 0$, there determined (up to a multiple of $2\pi$) is a multi-valued angle function $\beta_x$, usually called the argument of $\eta_x$, by $\eta_x=|\eta_x|e^{\ima \beta_x}$.

Thus the following definition is natural:

\begin{dfn}\rm Let $x: M^2\to \bbc^2$ be an immersed surface with $x^*\Omega\neq 0$. Then the multi-valued angle function $\beta_x$ is called the Lagrangian angle of $x$.\end{dfn}

\begin{rmk}\rm
Clearly, $\alpha:=-d\beta_x$ is a well defined closed $1$-form on $M^2$ which is called {\em the Maslov form} of $x$ and represents a cohomology class $[\alpha]\in H^1(M^2)$, referred to as {\em the Maslov class} of $x$.
\end{rmk}

Next we are to find how the \kr angle $\theta$ influences the Lagrangian angle $\beta_x$ for an immersion $x:M^2\to\bbc^2$. To this end, we need the following simple lemma:

\begin{lem}For \label{asu}any $A\in SU(2)$, it holds that $\eta_{Ax}=\eta_x$. In particular, $\beta_{Ax}=\beta_x$ in case that $\eta_x\neq 0$.\end{lem}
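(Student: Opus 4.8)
The plan is to exploit the fact that the holomorphic volume form $\Omega=dz_1\wedge dz_2$ on $\bbc^2$ is $SU(2)$-invariant as a form, so that precomposing the immersion with an ambient rotation $A\in SU(2)$ leaves the pulled-back value $\eta_x=\Omega(x_*e_1,x_*e_2)$ unchanged. Concretely, I would first recall that $A\in SU(2)$ acts on $\bbc^2$ as a complex-linear map with $\det_{\bbc}A=1$, and that for a complex-linear map $A$ one has the transformation law $A^*\Omega=(\det_{\bbc}A)\,\Omega=\Omega$. This is the structural heart of the statement and should be isolated cleanly at the start.

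Next I would translate this invariance into a statement about $\eta_{Ax}$. By Lemma \ref{lem3.1}, $\eta_{Ax}=\Omega\bigl((Ax)_*e_1,(Ax)_*e_2\bigr)$, and since $A$ is a fixed linear map we have $(Ax)_*e_i=A(x_*e_i)$. Here there is a subtlety worth flagging: the orthonormal frame $\{e_1,e_2\}$ used to compute $\eta_{Ax}$ should be the same one used for $\eta_x$, which is legitimate because $A$ is an isometry of $\bbc^2$ (indeed $SU(2)\subset SO(4)$ under the real realification), so $\{A(x_*e_1),A(x_*e_2)\}$ is again orthonormal and the induced volume form is preserved; thus $dV_{Ax}=dV_M$. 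Then the computation reads
\be
\eta_{Ax}=\Omega\bigl(A(x_*e_1),A(x_*e_2)\bigr)=(A^*\Omega)(x_*e_1,x_*e_2)=\Omega(x_*e_1,x_*e_2)=\eta_x.\nnm
\ee

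Finally, the ``in particular'' clause follows immediately: if $\eta_x\neq 0$ then $\eta_{Ax}=\eta_x\neq 0$ as well, so both $\beta_x$ and $\beta_{Ax}$ are defined, and since $|\eta_{Ax}|e^{\ima\beta_{Ax}}=|\eta_x|e^{\ima\beta_x}$ with equal moduli, the arguments agree modulo $2\pi$, i.e. $\beta_{Ax}=\beta_x$. The only point requiring genuine care, and what I expect to be the main (though modest) obstacle, is verifying the identity $A^*\Omega=(\det_{\bbc}A)\,\Omega$ for complex-linear $A$ and confirming that $SU(2)$ acts by Euclidean isometries preserving $dV_M$; once these two facts are in hand the argument is purely formal. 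I would likely dispatch the determinant identity by noting that $\Omega$ spans the top complex exterior power $\Lambda^2(\bbc^2)^*$, on which any $A\in GL(2,\bbc)$ acts by multiplication by $\det_{\bbc}A$, which equals $1$ precisely for $A\in SL(2,\bbc)\supset SU(2)$.
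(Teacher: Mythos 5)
Your proposal is correct and follows essentially the same route as the paper: the key identity $A^*\Omega=(\det A)\,\Omega=\Omega$ for $A\in SU(2)$, followed by the pullback chain $\eta_{Ax}=((Ax)^*\Omega)(e_1,e_2)=(x^*(A^*\Omega))(e_1,e_2)=\eta_x$. Your extra remarks on $A$ being a Euclidean isometry (so the induced metric, frame, and $dV_M$ are unchanged) make explicit a point the paper leaves implicit, but the substance of the argument is the same.
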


\proof Note that
$$SU(2)=\left\{\lmx a&b\\ -\bar{b}&\bar{a} \rmx\Big|a,b\in\bbc,|a|^2+|b|^2=1\right\}.$$
Write $z=(z_1,z_2)\in \bbc^2$ and, for any $A=\lmx a&b\\ -\bar b&\bar a\rmx \in SU(2)$, $\td z\equiv Az=(\td z_1,\td z_2)$. Since $\det A=1$, we find
\be\label{3.1} A^*\Omega=d\td z_1\wedge d\td z_2=(\det A)dz_1\wedge dz_2=\Omega.\ee

On the other hand, by Lemma \ref{lem3.1}, we have
$$x^*\Omega=\eta_x dV_M,\quad (Ax)^*\Omega=\eta_{Ax}dV_M.$$
Hence by \eqref{3.1}
$$
\eta_{Ax}=((Ax)^*\Omega)(e_1,e_2)=(x^*(A^*\Omega))(e_1,e_2)=(x^*\Omega)(e_1,e_2)=\eta_x.
$$
\endproof

Now we suppose that the \kr angle $\theta\in (0,\pi)$. For any $p\in M^2$, we can find an $A\in SU(2)$ to rotate $\bbc^2$ such that $(Ax)_*e_1=\frac{\partial}{\partial x_1}$ at $p$. Then, since $(Ax)_*e_1$ and $(Ax)_*e_2$ are orthogonal, we obtain from \eqref{2.1} that
$$(Ax)_*e_2=\cos\theta\frac{\partial}{\partial y_1}+\sin\theta(\cos\beta_{Ax}\frac{\partial}{\partial x_2}+\sin\beta_{Ax}\frac{\partial}{\partial y_2}).$$
Since
\be\Omega=(dx_1+\ima dy_1)\wedge(dx_2+\ima dy_2),\label{2.5}\ee
we compute
$$\Omega((Ax)_*e_1,(Ax)_*e_2)=\sin\theta(\cos\beta_{Ax}+\ima\sin\beta_{Ax})=\sin\theta e^{\ima\beta_{Ax}}.$$
Hence
\be (Ax)^*\Omega=\sin\theta e^{\ima\beta_{Ax}}dV_M,\ \mb{ at }p.\label{2.6}\ee
According to Lemma \ref{asu}, we can get
\be x^*\Omega=\sin\theta e^{\ima\beta_{x}}dV_M,\ \mb{ at }p.\label{2.6z}\ee
Thus we have proved
\begin{prop}
Let $x:M^2\to \bbc^2$ be an immersed surface with \kr angle $\theta\in (0,\pi)$. Then
$|\eta_x|^2=\sin^2\theta$. In particular, $\eta_x\neq 0$ everywhere on $M^2$.
\end{prop}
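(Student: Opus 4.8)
The plan is to compute $\eta_x=\Omega(x_*e_1,x_*e_2)$ directly and show its modulus equals $\sin\theta$. The key leverage is the $SU(2)$-invariance established in Lemma \ref{asu}, which lets me reduce the computation at each point to a convenient normal form. Since $SU(2)$ acts transitively on the unit sphere $S^3\subset\bbc^2$, at any given $p\in M^2$ I can choose $A\in SU(2)$ (a holomorphic isometry, so the \kr angle is unchanged) such that $(Ax)_*e_1=\frac{\partial}{\partial x_1}$. Then $\mathbf{J}(Ax)_*e_1=\frac{\partial}{\partial y_1}$, and the defining relation \eqref{2.1} forces the $\frac{\partial}{\partial y_1}$-component of the unit vector $(Ax)_*e_2$ to equal $\cos\theta$; orthogonality to $(Ax)_*e_1$ kills its $\frac{\partial}{\partial x_1}$-component, while its length being $1$ leaves a residual vector of norm $\sin\theta$ in the $\frac{\partial}{\partial x_2},\frac{\partial}{\partial y_2}$ plane, parametrized by a single angle.

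Next I would expand $\Omega=(dx_1+\ima\,dy_1)\wedge(dx_2+\ima\,dy_2)$ and evaluate it on the pair $\big((Ax)_*e_1,(Ax)_*e_2\big)$. Only the terms $dx_1\wedge dx_2$ and $\ima\,dx_1\wedge dy_2$ survive, and they assemble into $\sin\theta\,(\cos\beta_{Ax}+\ima\sin\beta_{Ax})=\sin\theta\,e^{\ima\beta_{Ax}}$, which simultaneously confirms that the free angle above is exactly the Lagrangian angle $\beta_{Ax}$. Thus $(Ax)^*\Omega=\sin\theta\,e^{\ima\beta_{Ax}}\,dV_M$ at $p$, and Lemma \ref{asu} (giving $\eta_x=\eta_{Ax}$ and $\beta_x=\beta_{Ax}$) transports this back to $x^*\Omega=\sin\theta\,e^{\ima\beta_x}\,dV_M$, i.e. \eqref{2.6z}. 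Comparing with $x^*\Omega=\eta_x\,dV_M$ yields $\eta_x=\sin\theta\,e^{\ima\beta_x}$, whence $|\eta_x|^2=\sin^2\theta$; since $\theta\in(0,\pi)$ forces $\sin\theta>0$, we conclude $\eta_x\neq 0$ at every point.

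The only genuine subtlety is bookkeeping: one must check that the reduction to the normal form is legitimate at each point (transitivity of $SU(2)$ on $S^3$), and that $SU(2)$-invariance is precisely what licenses transferring a pointwise, frame-adapted computation back to the original immersion. This is exactly the role of Lemma \ref{asu}; no analytic difficulty arises.

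An alternative, frame-independent route avoids the rotation entirely. Decomposing $\Lambda^2\bbr^4$ into self-dual and anti-self-dual parts, the three forms $\omega$, $\re\Omega$ and $\im\Omega$ are mutually orthogonal and all self-dual, each of norm $\sqrt2$, so they span the self-dual space $\Lambda^+$. For any orthonormal pair $u,v$ the self-dual part of $u\wedge v$ has squared norm $\tfrac12$, which gives the Pythagorean identity $\omega(u,v)^2+\re\Omega(u,v)^2+\im\Omega(u,v)^2=1$. Taking $u=x_*e_1$, $v=x_*e_2$ and using \eqref{1.2} produces $\cos^2\theta+|\eta_x|^2=1$ at once, hence $|\eta_x|^2=\sin^2\theta$. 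I would present the $SU(2)$ argument as the main line, since it also exhibits $\beta_x$ explicitly, and record this identity as a conceptual confirmation.
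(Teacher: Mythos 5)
Your main argument is exactly the paper's proof: rotate by $A\in SU(2)$ so that $(Ax)_*e_1=\frac{\partial}{\partial x_1}$ at the given point, read off $(Ax)_*e_2$ from \eqref{2.1} and orthonormality, evaluate $\Omega$ to get $\sin\theta\,e^{\ima\beta_{Ax}}$, and transfer back via Lemma \ref{asu}. The self-dual decomposition you sketch as a ``conceptual confirmation'' is a correct and pleasant frame-independent alternative, but the line of proof you actually present coincides with the paper's.
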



We are in a position to give a proof of Theorem \ref{thm1.2}.

First we assume that $\sin\theta\neq 0$.

Note that, for any orthonormal frame field $\{e_1,e_2\}$, we can find a unique normal frame $\{e_3,e_4\}$ by \eqref{2.2}. So, along $x(M^2)$, we have an orthonormal frame $\{x_*e_i,e_a\}$ of $\bbc^2$ with the dual frame field $\{\omega^A,1\leq A\leq 4\}$. As the first step to prove Theorem \ref{thm1.2}, we claim that
\be\label{OME}
\Omega\circ x=\frac{1}{\sin\theta}e^{\ima\beta_x} (\omega^1-\ima\mathbf{J}\omega^1)\wedge(\omega^2-\ima\mathbf{J}\omega^2 ).
\ee

In fact, since $\Omega$ is a $(2,0)$-form and $\omega^1-\ima\mathbf{J}\omega^1$, $\omega^2-\ima\mathbf{J}\omega^2$ are two linearly independent $(1,0)$-forms, we can write
$$\Omega\circ x=\mu\left(\omega^1-\ima\mathbf{J}\omega^1\right) \wedge\left(\omega^2-\ima\mathbf{J}\omega^2\right)$$
for some complex-valued function $\mu$ on $M^2$.

Since by \eqref{2.2}
\be\label{3.6}\begin{aligned}\mathbf{J}(\omega^{1})=&-\omega^2\cos\theta -\omega^{3}\sin\theta,\quad \mathbf{J}(\omega^2)=\omega^{1}\cos\theta -\omega^{4}\sin\theta,\\
\mathbf{J}(\omega^{3})=&\omega^{1}\sin\theta +\omega^{4}\cos\theta,\qquad \mathbf{J}(\omega^{4})=\omega^2\sin\theta -\omega^{3}\cos\theta,
\end{aligned}\ee
it follows that
\begin{align}
&\fr12(\omega^i-\ima\mathbf{J}\omega^i)(x_*e_i-\ima\mathbf{J}(x_*e_i))=1,\quad i=1,2;\label{3.7}\\
&\fr12(\omega^1-\ima\mathbf{J}\omega^1)(x_*e_2-\ima\mathbf{J}(x_*e_2))\nnm\\ &\qquad =-\fr12(\omega^2-\ima\mathbf{J}\omega^2)(x_*e_1-\ima\mathbf{J}(x_*e_1))\nnm\\
&\qquad =\ima\cos\theta.\label{3.8}
\end{align}
So that
\be \Omega\left(\frac{x_*e_1-\ima\mathbf{J}(x_*e_1)}{2},\frac{x_*e_2 -\ima\mathbf{J}(x_*e_2)}{2}\right)=\mu \sin^2\theta.\label{o2}\ee

On the other hand, for any given $p\in M^2$, we can assume that $x_{*p}e_1=\frac{\partial}{\partial x_1}|_{x(p)}$ because $A^*\Omega=\Omega$ for any $A\in SU(2)$. Then it holds that
\begin{align} x_*e_1-\ima&\mathbf J (x_*e_1)=\frac{\partial}{\partial x_1}-\ima\frac{\partial}{\partial y_1},\label{2.7}\\
x_*e_2-\ima&\mathbf J(x_*e_2)=\cos\theta\frac{\partial}{\partial y_1}+\sin\theta\left(\cos\beta_x\frac{\partial}{\partial x_2}+\sin\beta_x\frac{\partial}{\partial y_2}\right)\nnm\\
&+\ima\cos\theta\frac{\partial}{\partial x_1}-\ima\sin\theta\left(\cos\beta_x\frac{\partial}{\partial y_2}-\sin\beta_x\frac{\partial}{\partial x_2}\right)\label{2.8}\end{align}
at $p$.
Substitute \eqref{2.7} and \eqref{2.8} into \eqref{2.5} and obtain
\be \Omega\left(\frac{x_*e_1-\ima\mathbf{J}(x_*e_1)}{2},\frac{x_*e_2 -\ima\mathbf{J}(x_*e_2)}{2}\right)=\sin\theta e^{\ima\beta_x}\label{o1}\quad\mb{at }p.\ee
Comparing \eqref{o2} with \eqref{o1}, we obtain $$\mu(p)=\frac{1}{\sin\theta}e^{\ima\beta_x(p)}.$$
Thus \eqref{OME} is proved by the arbitrariness of the point $p$.

The second step is to make use of the fact that $\Omega$ is parallel on $\bbc^2$. For any tangent vector field $X\in TM^2$, we have
\be\bar{ D}_X\Omega=0.\label{2.9}\ee
Note that the \kr angle $\theta$ is constant. \eqref{2.9} is equivelant to
\begin{align}
0=&\ima\sin^2\theta d\beta_x(X)\Omega+\sin\theta e^{\ima\beta_x}(\bar{ D}_X\omega^1-\ima\bar{ D}_X\mathbf{J}\omega^1)\wedge(\omega^2-\ima \mathbf{J}\omega^2)\nnm\\
&+\sin\theta e^{\ima\beta_x}(\omega^1-\ima\mathbf{J}\omega^1)\wedge(\bar{ D}_X\omega^2-\ima\bar{ D}_X\mathbf{J}\omega^2).\label{2.10}
\end{align}

Since the Levi-Civita connection $\bar{D}$ is also a complex one, that is, $\bar D\mathbf{J}=\mathbf{J}\bar D$, we know that
$\bar{ D}_X\omega^1-\ima \bar{ D}_X\mathbf{J}\omega^1$ is a linear combination of $$\omega^1-\ima\mathbf{J}\omega^1 \quad\rm{and}\quad \omega^2-\ima\mathbf{J}\omega^2,$$
or
\be\label{3.13}
\bar{D}_X\omega^1-\ima \bar{ D}_X\mathbf{J}\omega^1=\rho_1(\omega^1-\ima\mathbf{J}\omega^1) +\rho_2(\omega^2-\ima\mathbf{J}\omega^2)
\ee
for two complex-valued functions $\rho_1$ and $\rho_2$.

Moreover, from $\bar D\mathbf{J}=\mathbf{J}\bar D$ and \eqref{3.6}, we have
\begin{align*}
\omega^1(\bar{D}_X\mathbf{J}e_1)=&\omega^1(\mathbf{J}(\bar{D}_Xe_1))=(\mathbf J \omega^1)(\bar{D}_X e_1)\\
=&(-\omega^2\cos\theta -\omega^{3}\sin\theta)(\bar{D}_X e_1)\\
=&-\cos\theta\langle \bar{D}_X e_1,e_2 \rangle-\sin\theta\langle \bar{D}_X e_1,e_3 \rangle,\\
\omega^1(\bar{D}_X\mathbf{J}e_2)=&\omega^1(\mathbf{J}(\bar{D}_Xe_2))=(\mathbf J \omega^1)(\bar{D}_X e_2)\\
=&(-\omega^2\cos\theta -\omega^{3}\sin\theta)(\bar{D}_X e_2)
=-\sin\theta\langle \bar{D}_X e_2,e_3 \rangle.
\end{align*}
From \eqref{3.7}, \eqref{3.8} and \eqref{3.13} it follows that
\begin{align}\rho_1-\ima \rho_2\cos\theta=&(\bar{D}_X\omega^1-\ima \bar{ D}_X\mathbf{J}\omega^1)((e_1-\ima\mathbf{J}e_1)/2)\notag\\
=&-\ima(\cos\theta\langle \bar{D}_X e_1,e_2 \rangle +\sin\theta\langle \bar{ D}_Xe_1,e_3 \rangle).\label{2.14}\\
\ima\rho_1\cos\theta+ \rho_2=&(\bar{D}_X\omega^1-\ima \bar{ D}_X\mathbf{J}\omega^1)((e_2-\ima\mathbf{J}e_2)/2)\notag\\=&-\ima\sin\theta\langle \bar{D}_X e_2,e_3 \rangle-\langle \bar{D}_X e_2,e_1 \rangle.\label{2.14a}\end{align}
Combining\eqref{2.14} and \eqref{2.14a} we can find $\rho_1$ and $\rho_2$ to obtain
\begin{align}
&\sin\theta(\bar{D}_X\omega^1-\ima \bar{ D}_X\mathbf{J}\omega^1)\nnm\\
=&\left(\cos\theta \langle \bar{D}_X e_2,e_3 \rangle-\ima\langle \bar{D}_X e_1,e_3 \rangle\right)(\omega^1-\ima\mathbf{J}\omega^1)\nnm\\
&-\left(\sin\theta\langle \bar{D}_X e_2,e_1 \rangle +\ima\langle \bar{D}_X e_2,e_3 \rangle +\cos\theta \langle \bar{D}_X e_1,e_3 \rangle\right)(\omega^2-\ima\mathbf{J}\omega^2)\end{align}
Similarly we have
\begin{align}&\sin\theta(\bar{D}_X\omega^2-\ima \bar{ D}_X\mathbf{J}\omega^2)\nnm\\
=&\left(-\cos\theta \langle \bar{D}_X e_1,e_4 \rangle-\ima\langle \bar{D}_X e_2,e_4 \rangle\right)(\omega^2-\ima\mathbf{J}\omega^2)\notag\\
&-\left(\sin\theta\langle \bar{D}_X e_1,e_2 \rangle-\cos\theta \langle \bar{D}_X e_2,e_4 \rangle +\ima\langle \bar{D}_X e_1,e_4 \rangle\right)(\omega^1-\ima\mathbf{J}\omega^1).\end{align}
Therefore we can rewrite \eqref{2.10} as
\begin{align}
0=&\left(\ima\sin\theta d\beta_x(X)+\cos\theta \langle \bar{D}_X e_2,e_3 \rangle-\ima \langle \bar{D}_X e_1,e_3 \rangle\right. \notag\\
 &\left.-\cos\theta \langle \bar{D}_X e_1,e_4 \rangle-\ima \langle \bar{D}_X e_2,e_4 \rangle\right)\Omega.\label{2.11}
\end{align}
Using Gauss formula and \eqref{2.11}, we find
\begin{align}
\ima\sin\theta d\beta_x(X)=&-\cos\theta \langle \bar{D}_X e_2,e_3 \rangle+\ima\langle \bar{D}_X e_1,e_3 \rangle \notag\\
 &+\cos\theta \langle \bar{D}_X e_1,e_4 \rangle+\ima\langle \bar{D}_X e_2,e_4 \rangle\notag\\
 =&-\cos\theta \langle h(X,e_2),e_3 \rangle+\ima\langle h(X,e_1),e_3 \rangle \notag\\
 &+\cos\theta \langle h(X,e_1),e_4 \rangle+\ima\langle h(X,e_2),e_4 \rangle\notag\\
 =&-\cos\theta \langle \bar{D}_{e_2} X ,e_3 \rangle+\ima\langle \bar{D}_{e_1} X ,e_3 \rangle \notag\\
 &+\cos\theta \langle \bar{D}_{e_1} X ,e_4 \rangle+\ima\langle \bar{D}_{e_2} X ,e_4 \rangle\notag\\
 =&\cos\theta \langle \bar{D}_{e_2} e_3,X \rangle-\ima\langle \bar{D}_{e_1} e_3, X \rangle\notag \\
 &-\cos\theta \langle \bar{D}_{e_1} e_4,X \rangle-\ima\langle \bar{D}_{e_2} e_4,X \rangle\label{2.12b}\end{align}
But from \eqref{2.2} it is easily seen that
\be\label{2.12} e_3=\frac{1}{\sin\theta}\mathbf Je_1-\frac{\cos\theta}{\sin\theta}e_2,\quad
e_4=\frac{1}{\sin\theta}\mathbf Je_2+\frac{\cos\theta}{\sin\theta}e_1.
\ee
Inserting \eqref{2.12} to \eqref{2.12b}, we obtain
\begin{align*}\ima\sin^2\theta d\beta_x(X)=&\cos\theta \big\langle \bar{D}_{e_2} \big(\mathbf Je_1\big),X \big\rangle-\cos\theta \big\langle \bar{D}_{e_2} \big(\cos\theta e_2\big),X \big\rangle\notag\\&-\ima\big\langle \bar{D}_{e_1} \big(\mathbf Je_1\big), X \big\rangle+\ima\big\langle \bar{D}_{e_1} \big(\cos\theta e_2\big), X \big\rangle \notag\\
&-\cos\theta \big\langle \bar{D}_{e_1} \big(\mathbf Je_2\big),X \big\rangle-\cos\theta \big\langle \bar{D}_{e_1} \big(\cos\theta e_1\big),X \big\rangle\\
&-\ima\big\langle \bar{D}_{e_2} \big(\mathbf Je_2\big),X \big\rangle-\ima\big\langle \bar{D}_{e_2} \big(\cos\theta e_1\big),X \big\rangle\\
=&\cos\theta \langle \mathbf J{D}_{e_2} e_1,X \rangle+\cos\theta \langle \mathbf Jh(e_2,e_1),X \rangle-\cos^2\theta \langle D_{e_2}e_2,X \rangle\\
&-\ima\langle \mathbf J{D}_{e_1}e_1, X \rangle-\ima\langle \mathbf Jh(e_1,e_1), X \rangle+\ima\cos\theta \langle{D}_{e_1} e_2, X \rangle \\
&-\cos\theta \langle \mathbf J{D}_{e_1}e_2,X \rangle-\cos\theta \langle \mathbf Jh(e_1,e_2),X \rangle-\cos^2\theta \langle {D}_{e_1} e_1,X \rangle\\
&-\ima\langle \mathbf J{D}_{e_2}e_2,X \rangle-\ima\langle \mathbf Jh(e_2,e_2),X \rangle-\ima\cos\theta \langle {D}_{e_2} e_1,X \rangle.
\end{align*}
For any given point $p\in M^2$, choose proper frame field $\{e_i\}$ satisfying $D_{e_i}e_j=0$ at $p$. Then
\begin{align*}
\sin^2\theta d\beta_x(X)=&-\langle \mathbf J(h(e_1,e_1)),X\rangle-\langle \mathbf J(h(e_2,e_2)),X\rangle\\
=&-\langle \mathbf J(h(e_1,e_1)+h(e_2,e_2)),X\rangle\\
=&-\langle (\mathbf JH)^\top,X\rangle\quad\mb{at }p.
\end{align*}
Since both $d\beta_x(X)$ and $\langle (\mathbf JH)^\top,X\rangle$ are globally defined functions and thus free of the choice of the frame field $\{e_i\}$, and $X$ is an arbitrary vector field on $M^2$, we have $\sin^2\theta\nabla\beta_x=-(\mathbf{J}H)^\top$ and Theorem \ref{thm1.2} is proved for $\sin\theta\neq 0$.

When $\sin\theta=0$, both sides of \eqref{1.4} vanish identically since any holomorphic curve in $\bbc^2$ is necessarily minimal.\endproof

\section{Examples}

In this section, we provide three examples of surfaces in $\bbc^2$. Example \ref{expl3.1} and \ref{expl3.2} are quoted from lecture notes by Jason Lotay.

{\expl\label{expl3.1}\rm Let $x: \mathbb{R}^2\to \bbc^2$ be given by
$$ x(u_1,u_2)=(e^{\ima u_1},e^{\ima u_2}),\quad ( u_1, u_2)\in \bbr^2.$$
A short calculation shows that the immersion is an immersed Lagrangian self-shrinker with the image $x(\bbr^2)=S^1(1)\times S^1(1)$, a standard torus. Furthermore, if we denote $e_1=\pp{}{ u_1}$, $e_2=\pp{}{ u_2}$, then
$$x_*e_1=(\ima e^{\ima  u_1},0),\quad x_*e_2=(0,\ima e^{\ima  u_2}).$$It is easy to check that
$$(x^*\Omega)(e_1,e_2)=e^{\ima( u_1+ u_2+\pi)}$$
implying that the Lagrangian angle of $x$ is given by $\beta_x= u_1+ u_2+\pi$ up to multiples of $2\pi$. Obviously, the Maslov class $[-d\beta_x]$ of $x$ is nontrivial and, by Theorem \ref{thm1.2} $x$ is not minimal.}

{\expl\label{expl3.2}\rm Let $x: \mathbb{R}^2\backslash\{0\}\to \bbc^2$ be given by
$(z_1,z_2)= x( u_1, u_2)$ with
$$z_1=u_1+ \frac{ u_1}{ u_1^2+ u_2^2}\ima,\quad
z_2= u_2+\frac{ u_2}{ u_1^2+ u_2^2}\ima,\quad ( u_1, u_2)\in \bbr^2.$$
This is known as the Lagrangian catenoid and, with respect to the induced metric $g$, an orthonormal frame field can be chosen as
$$e_1=\fr{u^2_1+u^2_2}{\sqrt{1+(u^2_1+u^2_2)^2}}\pp{}{u_1},\quad e_2=\fr{u^2_1+u^2_2}{\sqrt{1+(u^2_1+u^2_2)^2}}\pp{}{u_2}.$$
Then
$$dV_M=\fr{1+(u^2_1+u^2_2)^2}{(u^2_1+u^2_2)^2}du_1\wedge du_2,\quad\mb{ where } M^2=(\bbr^2,g),$$
and
\begin{align*}x_*e_1=&\fr{u^2_1+u^2_2}{\sqrt{1+(u^2_1+u^2_2)^2}}\left(1,0,\frac{ u_2^2- u_1^2}{( u_1^2+ u_2^2)^2},\frac{-2 u_1  u_2}{( u_1^2+ u_2^2)^2}\right),\\
x_*e_2=&\fr{u^2_1+u^2_2}{\sqrt{1+(u^2_1+u^2_2)^2}}\left(0,1,\frac{-2 u_1 u_2}{( u_1^2+ u_2^2)^2},\frac{ u_1^2- u_2^2}{( u_1^2+ u_2^2)^2}\right).
\end{align*}
A direct computation shows that
$$x^*\Omega=x^*(dz_1\wedge dz_2)=\fr{1+(u^2_1+u^2_2)^2}{(u^2_1+u^2_2)^2}du_1\wedge du_2=dV_M.$$
Thus
$$\eta_x\equiv 1 \quad \mb{and}\quad {\rm Arg}(\eta_x)=2k\pi,\ k\in\mathbb{Z}.$$
Therefore, the Lagrangian angle $\beta_x=0$ up to multiples of $2\pi$. Consequently by Theorem \ref{thm1.2}, $x$ is a minimal immersion.}

{\expl\label{expl3.3}\rm For any given two real constants $\theta_1$ and $\theta_2$, denote $\theta:=\theta_1+\theta_2$. Let $x: \mathbb{R}^2\to \bbc^2$ be defined by
$(z_1,z_2)= x( u_1, u_2)$ with
$$z_1=u_1\cos\theta_1+\ima u_2\cos\theta_2,\quad z_2=- u_2\sin\theta_2- \ima u_1\sin\theta_1,\quad ( u_1, u_2)\in \bbr^2.$$
Choose $e_1=\pp{}{u_1}$ and $e_2=\pp{}{u_2}$. Then
\be\label{e1}x_*e_1=(\cos\theta_1,0,0,-\sin\theta_1),\quad x_*e_2=(0,-\sin\theta_2,\cos\theta_2,0).\ee
Thus
\be\label{e3}\mathbf{J}(x_*e_1)=(0,\sin\theta_1,\cos\theta_1,0),\quad \mathbf{J}(x_*e_2)=(-\cos\theta_2,0,0,-\sin\theta_2).\ee
It follows that
$$\lagl \mathbf J(x_*e_1),x_*e_2\ragl=\cos(\theta_1+\theta_2)=\cos\theta,$$
implying that $x$ is of constant \kr angle $\theta$.

On the other hand, since
\begin{align*}
x^*\Omega=x^*(dz_1 \wedge dz_2)=&(\cos\theta_1du_1+\ima \cos\theta_2du_2)\wedge (-\sin\theta_2du_2- \ima\sin\theta_1 du_1)\\
=&-\sin(\theta_1+\theta_2)du_1\wedge du_2 =-\sin(\theta_1+\theta_2)dV_{\bbr},
\end{align*}
we know that the Lagrangian angle of $x$ is
$\beta_x=\pi$ up to multiples of $2\pi$. So by Theorem \ref{thm1.2}, $H=0$. Clearly, $x(\bbr^2)$ is actually a $2$-plane in $\bbc^2$ and is indeed totally geodesic.}

\section{Proof of the other main theorems}

In this section, we give proofs of other main theorems.

(1) {\em Proof of Theorem \ref{thm1.3}.}

For any tangent vector $v\in TM^2$, by the definition of a self-shrinker and the fundamental equations we find
$$
-A_Hv+D^\bot_v H
=\bar{D}_vH=\bar{D}_v(-x^\bot)
=\bar{D}_v(x^\top-x)\\
=D_vx^\top+h(v,x^\top)-v.
$$
So that
\be A_Hv=v- D_vx^\top,\quad D^\bot_vH=h(v,x^\top).\label{5.1}\ee
For a given orthonormal frame field $\{e_1,e_2\}$ on $M^2$, let the normal frame $\{e_3,e_4\}$ be defined via \eqref{2.2}. Then a direct computation shows that
\be
\mathbf{J}H=-\sin\theta (H^3x_*e_1+H^4x_*e_2)+\cos\theta(H^4e_3-H^3e_4)\label{5.2}
\ee
which implies that
\begin{align}(\mathbf JH)^\top=&-\sin\theta(H^3x_*e_1+H^4x_*e_2),\label{5.3}\\
(\mathbf JH)^\bot=&\cos\theta(H^4e_3-H^3e_4).\label{5.4}\end{align}
Since the \kr angle $\theta$ is constant, by Lemma \ref{ch} and \eqref{rch}, we know that
\be h_{11}^4=h_{12}^3,\quad h_{12}^4=h_{22}^3,\quad \omega_1^2=\omega_3^4+\cot\theta(\omega_1^3+\omega_2^4).\label{5.5}\ee
Applying \eqref{5.3} -- \eqref{5.5}, we have
\begin{align*}
 D_{e_i}(\mathbf JH)^\top=&-\sin\theta D_{e_i}(H^3e_1+H^4e_2)\\
=&-\sin\theta\big(e_i(H^3)e_1+e_i(H^4)e_2+H^3\omega_1^2(e_i)e_2+H^4\omega_2^1(e_i)e_1 \big)\\
=&-\sin\theta\big(e_i(H^3)e_1+e_i(H^4)e_2+(H^3e_2-H^4e_1)\omega_1^2(e_i)\big)\\
=&-\sin\theta\big(e_i(H^3)e_1+e_i(H^4)e_2+(H^3e_2-H^4e_1)\omega_3^4(e_i)\\
&+\cot\theta(H^3e_2-H^4e_1)(\omega_1^3+\omega_2^4)(e_i)\big)\\
=&-\sin\theta\big(e_i(H^3)e_1+e_i(H^4)e_2+(H^3e_2-H^4e_1)\omega_3^4(e_i)\big)\\
&-\cos\theta(H^3e_2-H^4e_1)(h_{1i}^3+h_{2i}^4);\end{align*}
and
\begin{align*}
\mathbf J D_{e_i}^\bot H=&\mathbf J D_{e_i}^\bot(H^3e_3+H^4e_4)\\
=&\mathbf J\big(e_i(H^3)e_3+e_i(H^4)e_4+H^3 D_{e_i}^\bot e_3+H^4 D_{e_i}^\bot e_4\big)\\
=&\mathbf J\big(e_i(H^3)e_3+e_i(H^4)e_4+H^3\omega_3^4(e_i)e_4+H^4\omega_4^3(e_i)e_3\big)\\
=&\big(e_i(H^3)-H^4\omega_3^4(e_i)\big)(-\sin\theta e_1-\cos\theta e_4)\\
&+\big(e_i(H^4)+H^3\omega_3^4(e_i)\big)(-\sin\theta e_2+\cos\theta e_3).
\end{align*}
Comparing the above two equations, we find
\be D_{e_i}(\mathbf JH)^\top=(\mathbf J D_{e_i}^\bot H)^\top-\cos\theta(H^3e_2-H^4e_1)(h_{1i}^3+h_{2i}^4).\label{5.6}\ee
Consequently, from \eqref{5.1}, \eqref{5.5} and \eqref{5.6} it follows that
\begin{align*}
\dv(\mathbf JH)^\top=&\sum_{i}\lagl  D_{e_i}(\mathbf JH)^\top,e_i\ragl\\
=&\sum_{i}\lagl\mathbf J( D_{e_i}^\bot H)-\cos\theta(H^3e_2-H^4e_1)(h_{1i}^3+h_{2i}^4),e_i\ragl\\
=&\sum_{i}\lagl\mathbf J( D_{e_i}^\bot H),e_i\ragl-\cos\theta(\lagl(H^3e_2-H^4e_1)(h_{11}^3+h_{21}^4),e_1\ragl\\
&+\lagl(H^3e_2-H^4e_1)(h_{12}^3+h_{22}^4),e_2\ragl)\\
=&\sum_{i}\lagl\mathbf J( D_{e_i}^\bot H),e_i\ragl-\cos\theta(-H^4(h_{11}^3+h_{22}^3)+H^3(h_{11}^4+h_{22}^4))\\
=&\sum_{i}\lagl\mathbf J( D_{e_i}^\bot H),e_i\ragl-\cos\theta(-H^4H^3+H^3H^4)\\
=&-\sum_{i}\lagl( D_{e_i}^\bot H),\mathbf J e_i\ragl=-\sum_{i}\lagl h(e_i,x^\top),\mathbf J e_i\ragl\\
=&-\lagl h(e_1,x^\top),\sin\theta e_3\ragl-\lagl h(e_2,x^\top),\sin\theta e_4\ragl\\
=&-\lagl h(e_1,x^1e_1+x^2e_2),\sin\theta e_3\ragl-\lagl h(e_2,x^1e_1+x^2e_2),\sin\theta e_4\ragl\\
=&-\sin\theta(x^1h_{11}^3+x^2h_{12}^3+x^1h_{21}^4+x^2h_{22}^4)\\
=&-\sin\theta(x^1(h_{11}^3+h_{22}^3)+x^2(h_{11}^4+h_{22}^4))\\
=&-\sin\theta(x^1H^3+x^2H^4)\\
=&\lagl (\mathbf JH)^\top,x^\top\ragl,\notag
\end{align*}
where $x^\top=\sum_i x^ie_i$.
If $[\alpha]=0$, there exists a globally defined smooth function $\vfi$ satisfying $\alpha=-d\vfi$, or equivalently, $\nabla\vfi=\nabla\beta_x$. By Theorem \ref{thm1.2}, we obtain an elliptic linear equation
$$\Delta\vfi=\frac{1}{2}\lagl\nabla\vfi,\nabla|x|^2\ragl.$$
Then according to the maximum principle (see for example, \cite{gt}), $\vfi$ must be a constant and thus $\alpha=-d\vfi\equiv 0$ which with Theorem \ref{thm1.2} implies $H\equiv0$, contradicting the fact that there are no compact minimal submanifolds in an Euclidean space.
\endproof



To prove Theorem \ref{thm1.8} and Theorem \ref{thm1.9}, the following lemma is needed:

\begin{lem}Let $x:M^2\to \bbc^2$ be a self-shrinker with \kr angle $\theta$. Then \begin{align}&\mathcal{L}\cos\theta=-\frac{1}{4}\cos\theta|\bar D \mathbf J_M|^2,\label{5.7}\\
\frac{1}{2}\mathcal{L}\cos^2&\theta=\sin^2\theta| \nabla\theta|^2-\frac{1}{4}\cos^2\theta|\bar D \mathbf J_M|^2.\label{5.8}\end{align}
\end{lem}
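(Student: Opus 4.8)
The key is formula \eqref{5.7}; the second identity \eqref{5.8} then follows by a purely algebraic manipulation, so I would dispose of it first. For any function $u$ one has $\tfrac12\mathcal{L}(u^2)=u\,\mathcal{L}u+|\nabla u|^2$. Taking $u=\cos\theta$ and using $\nabla\cos\theta=-\sin\theta\,\nabla\theta$ (hence $|\nabla\cos\theta|^2=\sin^2\theta\,|\nabla\theta|^2$) gives $\tfrac12\mathcal{L}\cos^2\theta=\cos\theta\,\mathcal{L}\cos\theta+\sin^2\theta\,|\nabla\theta|^2$; substituting \eqref{5.7} yields \eqref{5.8} immediately. Thus the whole lemma reduces to establishing \eqref{5.7}.

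To prove \eqref{5.7} I would first record the gradient of the \kr angle. By Lemma \ref{ch}, in the adapted frame $\nabla_{e_i}\theta=h_{1i}^4-h_{2i}^3$, so $|\nabla\theta|^2=\sum_i(h_{1i}^4-h_{2i}^3)^2$; combined with the displayed expression $|\bar D\mathbf{J}_M|^2=4\sum_i\big((h_{2i}^4+h_{1i}^3)^2+(h_{1i}^4-h_{2i}^3)^2\big)$ this yields the useful identity
\[
\tfrac{1}{4}|\bar D\mathbf{J}_M|^2-|\nabla\theta|^2=\sum_i(h_{1i}^3+h_{2i}^4)^2 .
\]
Since $\mathcal{L}$ obeys the chain rule $\mathcal{L}f(\theta)=f''(\theta)|\nabla\theta|^2+f'(\theta)\mathcal{L}\theta$, taking $f=\cos$ turns \eqref{5.7} into $-\cos\theta\,|\nabla\theta|^2-\sin\theta\,\mathcal{L}\theta=-\tfrac14\cos\theta\,|\bar D\mathbf{J}_M|^2$, which by the identity above is equivalent to the single scalar statement
\[
\mathcal{L}\theta=\cot\theta\sum_i(h_{1i}^3+h_{2i}^4)^2 .
\]

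The core of the argument is therefore the computation of $\mathcal{L}\theta=\Delta\theta-\lagl x,\nabla\theta\ragl$. Working where $\sin\theta\neq0$ at a point with $D_{e_i}e_j=0$ (the locus $\sin\theta=0$ being recovered by continuity), one has $\Delta\theta=\sum_i e_i(h_{1i}^4-h_{2i}^3)$. I would convert these ordinary derivatives of the components into covariant derivatives of the second fundamental form, using that the adapted normal frame $\{e_3,e_4\}$ is pinned to $\{e_1,e_2\}$ through \eqref{2.2} and hence produces connection terms $\omega_1^2,\omega_3^4,\omega_i^a$; the structure relations \eqref{2.3}--\eqref{2.4} coming from $\bar D\mathbf{J}=0$ are then used to rewrite those connection forms. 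After applying the Codazzi symmetry $h_{ij,k}^a=h_{ik,j}^a$ (valid since $\bbc^2$ is flat) to reorganize the derivatives into traces of $\nabla h$, the self-shrinker identity $H_{,i}^a=\sum_j h_{ij}^a\lagl x,e_j\ragl$ of Lemma \ref{s1} is brought in to cancel the drift term $\lagl x,\nabla\theta\ragl$ against the divergence terms, leaving exactly $\cot\theta\sum_i(h_{1i}^3+h_{2i}^4)^2$.

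The main obstacle is precisely this second-order bookkeeping. Because $\{e_3,e_4\}$ cannot be chosen parallel independently of $\{e_1,e_2\}$, the computation of $\Delta\theta$ generates several connection-form contributions that must be reduced via \eqref{2.4} and the Codazzi relations \emph{before} the self-shrinker equation produces the cancellations. Tracking the correct factors of $\cos\theta$, $\sin\theta$ and $\cot\theta$ through this reduction, and verifying the clean collapse to the nonnegative quadratic $\sum_i(h_{1i}^3+h_{2i}^4)^2$, is where all the real work lies.
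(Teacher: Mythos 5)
Your proposal follows essentially the same route as the paper: reduce \eqref{5.8} to \eqref{5.7} via $\tfrac12\mathcal{L}(u^2)=u\mathcal{L}u+|\nabla u|^2$, compute $\Delta\theta$ in the adapted frame using Lemma \ref{ch} and Codazzi, and cancel the drift term with Lemma \ref{s1}; your target identity $\mathcal{L}\theta=\cot\theta\sum_i(h_{1i}^3+h_{2i}^4)^2$ is exactly what the paper's \eqref{5.11} combined with \eqref{5.14} yields. The one small caveat is that ``recovered by continuity'' only covers accumulation points of $\{\sin\theta\neq0\}$; at interior points of $\{\sin\theta=0\}$ the paper instead argues directly that $\mathbf J_M$ coincides with $\pm\mathbf J$ there, so $|\bar D\mathbf J_M|^2=0$ and both sides of \eqref{5.7} and \eqref{5.8} vanish.
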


\begin{proof} Around each point of the open subset
$$M^*:=\{p\in M^2;\ \sin\theta(p)\neq0\},$$
we choose an orthonormal frame field $\{e_1,e_2\}$ on $M^2$ which via \eqref{2.2} defines the normal frame $\{e_3,e_4\}$. Then by \eqref{2.3} and \eqref{2.4}
\be \nabla\theta=\sum_i(h_{1i}^4-h_{2i}^3)e_i,\quad \Gamma_{1i}^2-\Gamma_{3i}^4=\cot\theta(h_{1i}^3+h_{2i}^4),\label{5.9}\ee
where $\Gamma_{AB}^C$ are determined by $\omega_A^C=\sum_B\Gamma^C_{AB}\omega^B$. From Codazzi equation and \eqref{5.9}, it follows that
\begin{align}
\Delta\theta=&\sum_i(h_{1i}^4-h_{2i}^3)_{,i}\notag \\
=&\sum_i(e_i(h_{1i}^4-h_{2i}^3)-(h_{1j}^4-h_{2j}^3)\Gamma_{ii}^j)\nonumber\\
=&\sum_i(h_{1ii}^4-h_{2ii}^3+h_{2i}^4\Gamma_{1i}^2-h_{1i}^3\Gamma_{2i}^1-h_{1i}^3\Gamma_{3i}^4+h_{2i}^4\Gamma_{4i}^3)\nonumber\\
=&\sum_i(H_{,1}^4-H_{,2}^3+(h_{2i}^4+h_{1i}^3)\Gamma_{1i}^2-(h_{2i}^4+h_{1i}^3)\Gamma_{3i}^4)\nonumber\\
=&\sum_i(H_{,1}^4-H_{,2}^3+(h_{2i}^4+h_{1i}^3)(\Gamma_{1i}^2-\Gamma_{3i}^4))\nonumber\\
=&\sum_i(H_{,1}^4-H_{,2}^3+\cot\theta(h_{2i}^4+h_{1i}^3)^2).\label{5.11}
\end{align}
Consequently, by \eqref{5.9} and \eqref{5.11},
\begin{align}
\Delta\cos\theta=&-\cos\theta|\nabla\theta|^2-\sin\theta\Delta\theta\notag\\
=&-\cos\theta\sum_i(h_{1i}^4-h_{2i}^3)^2-\sin\theta(H_{,1}^4-H_{,2}^3)-\cos\theta\sum_i(h_{2i}^4+h_{1i}^3)^2\notag\\
=&-\cos\theta(\sum_i((h_{1i}^4-h_{2i}^3)^2+(h_{2i}^4+h_{1i}^3)^2))-\sin\theta(H_{,1}^4-H_{,2}^3)\notag\\
=&-\frac{1}{4}\cos\theta|\bar D \mathbf J_M|^2-\sin\theta(H_{,1}^4-H_{,2}^3),\label{5.12}
\end{align}
and thus
\begin{align}
\frac{1}{2}\Delta(\cos^2\theta)=&|\nabla\cos\theta|^2+\cos\theta(\Delta\cos\theta)\notag\\
=&\sin^2\theta|\nabla\theta|^2-\frac{1}{4}\cos^2\theta|\bar D \mathbf J_M|^2-\cos\theta\sin\theta(H_{,1}^4-H_{,2}^3).\label{5.13}
\end{align}
By Lemma \ref{s1}, we have
\be
H_{,1}^4-H_{,2}^3=\sum_i h_{1i}^4\lagl x,e_i\ragl-\sum_i h_{2i}^3\lagl x,e_i\ragl
=\sum_i(h_{1i}^4-h_{2i}^3)\lagl x,e_i\ragl.
\label{5.14}
\ee
Inserting \eqref{5.14} to \eqref{5.12} and \eqref{5.13}, respectively, we obtain
\begin{align}
\Delta\cos\theta=&-\frac{1}{4}\cos\theta|\bar D \mathbf J_M|^2-\sin\theta\sum_i(h_{1i}^4-h_{2i}^3)\lagl x,e_i\ragl\notag\\
=&-\frac{1}{4}\cos\theta|\bar D \mathbf J_M|^2-\lagl x,\sin\theta\sum_i(h_{1i}^4-h_{2i}^3)e_i\ragl\notag\\
=&-\frac{1}{4}\cos\theta|\bar D \mathbf J_M|^2+\lagl x, \nabla\cos\theta\ragl,\label{5.15}
\end{align}
and
\begin{align}
\frac{1}{2}\Delta\cos^2\theta=&|\nabla\cos\theta|^2+\cos\theta(\Delta\cos\theta)\notag\\
=&\sin^2\theta|\nabla\theta|^2-\frac{1}{4}\cos^2\theta|\bar D \mathbf J_M|^2+\cos\theta\lagl x, \nabla\cos\theta\ragl\notag\\
=&\sin^2\theta|\nabla\theta|^2-\frac{1}{4}\cos^2\theta|\bar D \mathbf J_M|^2+\cos\theta\lagl x, \nabla\cos\theta\ragl\notag\\
=&\sin^2\theta|\nabla\theta|^2-\frac{1}{4}\cos^2\theta|\bar D \mathbf J_M|^2+\frac{1}{2}\lagl x, \nabla\cos^2\theta\ragl.\label{5.16}
\end{align}
By the definition of $\mathcal{L}$, \eqref{5.15} and \eqref{5.16} become, respectively,
\begin{align}
\mathcal{L}&\cos\theta=\Delta\cos\theta-\lagl x, \nabla\cos\theta\ragl
=-\frac{1}{4}\cos\theta|\bar D \mathbf J_M|^2,\label{5.17}\\
\frac{1}{2}\mathcal{L}\cos^2\theta=&\frac{1}{2}\Delta\cos^2\theta-\frac{1}{2}\lagl x, \nabla\cos^2\theta\ragl
=\sin^2\theta| \nabla\theta|^2-\frac{1}{4}\cos^2\theta|\bar D \mathbf J_M|^2.\label{5.18}
\end{align}
Thus \eqref{5.7} and \eqref{5.8} hold on the open set $M^*$.

At a point $p\not\in M^*$, there are two cases that need to be considered:

Case $1^\circ$: $p$ is an accumulation point of $M^*$.

Because both $\mathcal{L}\cos\theta$ and $-\cos\theta|\bar D \mathbf J_M|^2$ are globally defined and smooth, we can take the limits of the two sides of \eqref{5.7} to find \eqref{5.7} still holds at the point $p$. Similarly \eqref{5.8} holds at $p$.

Case $2^\circ$: $p$ is not an accumulation point of $M^*$.

In this case, $\mathbf J_M=\mathbf J$ around $p$. So $|\bar D \mathbf J_M|^2=0$.
Since $\cos\theta\equiv 1$ around $p$, $\mathcal{L}\cos\theta=0$. Therefore both \eqref{5.7} and \eqref{5.8} still hold at $p$.
\end{proof}

(2) {\em Proof of Theorem \ref{thm1.8}}

Take $v=\cos\theta$. Then
\begin{align}
|\nabla v|^2=&\sin^2\theta|\nabla \theta|^2=\sin^2\theta\sum_i|h_{1i}^4-h_{2i}^3|^2 \nnm\\
\leq& 2\sin^2\theta\sum_i((h_{1i}^4)^2+(h_{2i}^3)^2)\leq 2|h|^2,\label{5.19}\\
|\mathcal{L}v|=&|\mathcal{L}\cos\theta|
=\frac{1}{4}|\cos\theta|\bar D \mathbf J_M|^2|\leq4|h|^2.\label{5.20}
\end{align}

Since $\int_M|h|^2e^{-\frac{|x|^2}{2}}dV_M<\infty$, \eqref{5.19} and \eqref{5.20} show that, for $u\equiv 1$, $$\int_M(|u\nabla v|+|\nabla u\nabla v|+|u\mathcal{L} v|)e^{-\frac{|x|^2}{2}}dV_M< \infty.$$
Thus by Lemma \ref{s2}, we obtain
\be\int_M\mathcal{L}\cos\theta e^{-\frac{|x|^2}{2}}dV_M=0.\label{5.21}\ee
Since, by the assumption, $\cos\theta\geq 0$, it follows by \eqref{5.7} that $\mathcal{L}\cos\theta\leq0$. Therefore,
\be\mathcal{L}\cos\theta=-\frac{1}{4}\cos\theta|\bar D \mathbf J_M|^2\equiv0.\label{id}\ee

Consider the function $\varphi=|\nabla\theta|^2$.
If $\varphi$ is not identically zero on $M^2$, then there exists a point $p_0$ such that $\varphi\neq0$ and we can find a connected domain $U$ containing $p_0$ where $\varphi$ is non-zero identically. Denote $U_0=\{p\in U|\cos\theta(p)=0\}$. Then $U_0$ obviously contains no interior points. Therefore $U\backslash U_0$ is a dense set. By \eqref{id}, $|\bar D \mathbf J_M|^2\equiv0$ on $U\backslash U_0$ implying $h_{1i}^4=h_{2i}^3$ and $h_{2i}^4=-h_{1i}^3$ on $U\backslash U_0$. Thus, by \eqref{rch}, we have $\nabla\theta\equiv0$ on $U\backslash U_0$ which contradicts the definition of $U$. This shows that $\varphi\equiv0$ on $M^2$, that is, $\theta$ is constant.

If $\cos\theta=0$, then $x$ is a Lagrangian immersion;

If $\cos\theta\neq 0$, then $|\bar D \mathbf J_M|^2\equiv0$. It follows that $h_{1i}^4=h_{2i}^3$, $h_{2i}^4=-h_{1i}^3$, that is,
$$h_{11}^4=h_{21}^3=-h_{22}^4,\quad  h_{22}^3=h_{21}^4=-h_{11}^3$$
which implies that $H\equiv 0$.
Since $x$ is a self-shrinker, we obtain
\be\lagl x,e_3\ragl=0,\quad \lagl x,e_4\ragl=0.\label{5.22}\ee
Hence, for $a=3,4$,
$$\lagl x,\bar  D_{e_i}e_a\ragl=e_i\lagl x,e_a\ragl-\lagl\bar D_{e_i}x,e_a\ragl=0-\lagl e_i,e_a\ragl=0,$$
namely
$$\lagl x,-A_{e_a}e_i+\sum_b\Gamma_{ai}^be_b\ragl=0.$$
So that
\be \sum_jh_{ij}^a\lagl x,e_j\ragl=0,\quad a=3,4.\label{5.23}\ee
From \eqref{5.23}, it is easily seen that
$$
\ldt
 h_{11}^3 & h_{21}^3 \\
 h_{12}^3 & h_{22}^3 \\
\rdt=\ldt
 h_{11}^4 & h_{21}^4 \\
 h_{12}^4 & h_{22}^4 \\
\rdt=0.
$$
It follows that $$h_{11}^3h_{22}^3-(h_{21}^3)^2=h_{11}^4h_{22}^4-(h_{21}^4)^2=0$$
which with $H=0$ shows that
$$h_{11}^3=h_{21}^3=h_{11}^4=h_{21}^4=0.$$
That is, $M^2$ is totally geodesic and thus, by the completeness, $M^2$ must be a plane.
\endproof

(3) {\em Proof of Theorem \ref{thm1.9}}

Similar to those in (2), we take $u\equiv 1$ and $v=\frac{1}{2}\cos^2\theta$. Since
\begin{align}
|\nabla v|^2=&\cos^2\theta\sin^2\theta|\nabla\theta|^2
=\cos^2\sin^2\theta\sum_i|h_{1i}^4-h_{2i}^3|^2\notag\\
\leq& 2\sin^2\theta\sum_i((h_{1i}^4)^2+(h_{2i}^3)^2)
\leq 2|h|^2,\label{5.25}\\
|\mathcal{L}v|=&\frac{1}{2}|\mathcal{L}\cos^2\theta|
=|\sin^2\theta|\nabla\theta|^2-\frac{1}{4}\cos^2\theta|\bar D \mathbf J_M|^2|
\leq6|h|^2,\label{5.26}
\end{align}
we can use Lemma \ref{s2} to get
\be\frac{1}{2}\int_M(\mathcal{L}\cos^2\theta) e^{-\frac{|x|^2}{2}}dV_M=0.\label{5.27}\ee
On the other hand, by \eqref{a} and \eqref{5.8}, we find
\begin{align*}
\frac{1}{2}\mathcal{L}\cos^2\theta=&\sin^2\theta| \nabla\theta|^2-\frac{1}{4}\cos^2\theta|\bar D \mathbf J_M|^2\\
\leq&\sin^2\theta\fr{\lambda\cos^2\theta|\bar D \mathbf J_M|^2}{4(1-\lambda\cos^2\theta)}-\frac{1}{4}\cos^2\theta|\bar D \mathbf J_M|^2\\
=&-\fr{(1-\lambda)\cos^4\theta|\bar D \mathbf J_M|^2}{4(1-\lambda\cos^2\theta)}
\leq 0.
\end{align*}
Therefore
$$\cos^4\theta|\bar D \mathbf J_M|^2=0.$$

By using an argument similar to that in (2), we can conclude that either $\cos\theta\equiv 0$ or $|\bar D \mathbf J_M|^2\equiv 0$. Then the rest of the proof is omitted here since it is the same as that of the proof of Theorem \ref{thm1.8}. \endproof

\end{document}